\newcommand{\A}{\boldsymbol{A}}
\newcommand{\B}{\boldsymbol{B}}
\newcommand{\C}{\boldsymbol{C}}
\newcommand{\D}{\boldsymbol{D}}
\newcommand{\RR}{\mathbb{R}}
\newcommand{\NN}{\mathbb{N}}
\newcommand{\ZZ}{\mathbb{Z}}
\newcommand{\CC}{\mathbb{C}}
\newcommand{\de}{\operatorname{deg}}
\newcommand{\di}{\operatorname{dim}}
\newcommand{\E}{\mathcal{E}}
\newcommand{\p}{\pi}
\newcommand{\q}{\sigma}
\newcommand{\stir}{\genfrac{\{}{\}}{0pt}{}}
\newcommand{\stiri}{\genfrac{[}{]}{0pt}{}}
\newcommand{\fp}{\mathscr{P}}
\newcommand{\G}{\mathscr{G}}
\newcommand{\cb}{\mathbf{c}}
\newcommand{\ab}{\mathbf{a}}
\newcommand{\bb}{\mathbf{b}}
\newtheorem{thm}{Theorem}
\newtheorem{lem}[thm]{Lemma}
\newtheorem{cor}[thm]{Corollary}
\newtheorem{prop}[thm]{Proposition}
\newtheorem{Rem}[thm]{Remark}
\newtheorem{Def}[thm]{Definition}
\newtheorem{ex}[thm]{Example}
\newtheorem{alg}[thm]{Algorithm}
\date{}
\title{Stirling numbers and Gregory coefficients for the factorization of Hermite subdivision operators}
\author{
{Caroline Moosm\"uller}\thanks{Department of Mathematics, University of California, San Diego, 9500 Gilman Drive, La Jolla, CA 92093, USA. \texttt{cmoosmueller@ucsd.edu} (corresponding author)} 
\and 
{Svenja H\"uning}\thanks{ Institute of Geometry, TU Graz, Kopernikusgasse 24, 8010 Graz, Austria. \texttt{huening@tugraz.at}}
\and
{Costanza Conti}\thanks{DIEF, Universit\`{a} di Firenze, Viale Morgagni 40/44, 50134 Firenze, Italy. \texttt{costanza.conti@unifi.it}}}
\begin{document}

\maketitle

\begin{abstract}
In this paper we present a factorization framework for Hermite subdivision schemes refining function values and first derivatives, which satisfy a spectral condition of high order. In particular we show that spectral order $d$ allows for $d$ factorizations of the subdivision operator with respect to the \emph{Gregory operators}: A new sequence of operators we define using Stirling numbers and Gregory coefficients. We further prove that the $d$-th factorization provides a ``convergence from contractivity'' method for showing $C^d$-convergence of the associated Hermite subdivision scheme.
The power of our factorization framework lies in the reduction of computational effort for large $d$: In order to prove $C^d$-convergence, up to now, $d$ factorization steps were needed, while our method requires only one step, independently of $d$. Furthermore, in this paper, we show by an example that the spectral condition is not equivalent to the reproduction of polynomials.

  \par\smallskip\noindent
  {{\bf Keywords}: Hermite subdivision schemes; operator factorization; Stirling numbers; Gregory coefficients; polynomial reproduction} 
  \par\smallskip\noindent
  {{\bf MSC}: 65D15; 11B73; 41A15; 65D17} 
 
\end{abstract}

\section{Introduction}

Hermite subdivision schemes are iterative refinement rules, which, applied to discrete vector data, produce a function and its consecutive derivatives in the limit. They find similar applications as classical subdivision schemes \cite{cavaretta91}, but are preferred when the modeling of first derivatives (or even higher derivatives) is of particular interest. This can be the case, for example, for the generation of curves and surfaces \cite{dyn02,merrien92,xue05,xue06}, for the construction of multiwavelets \cite{cotronei17,cotronei18}, for interpolating and approximating manifold-valued tangent vector data \cite{moosmueller16,moosmueller17}, and for the analysis of biomedical images
\cite{conti15,uhlmann14}.

The convergence of subdivision schemes as well as the analysis of the regularity of their limit functions are topics of high interest. It is well-known that such analyses are strongly connected to the factorization of the associated subdivision operator
\cite{cohen96,conti16,dyn02,merrien12,micchelli98}.

In this paper we study factorization properties of subdivision operators $S_{\A}: \ell(\mathbb{Z})^2 \rightarrow \ell(\mathbb{Z})^2$, which correspond to Hermite subdivision schemes producing functions and first derivatives:
\begin{equation}\label{eq:intro_sd}
\left(S_{\A}\cb\right)_j=\sum_{k \in \ZZ}\A_{j-2k}\cb_j, \quad j \in \ZZ.
\end{equation}
Here $\cb$ is a sequence of $2$-dimensional vectors (the input data), and $\A$ is a finitely-supported sequence of $(2 \times 2)$-matrices, called the \emph{mask} of the operator. 

We prove that every Hermite subdivision operator \eqref{eq:intro_sd} satisfying the \emph{spectral condition of order $d$} (\Cref{def:spectral}), can be factorized with respect to the operators $\G^{[n]}:\ell(\mathbb{Z})^2 \rightarrow \ell(\mathbb{Z})^2$ defined by
\begin{equation}\label{def:gregory_op}
\G^{[n]}=\left[\begin{array}{cc}
		0 & \Delta^n \\
        \Delta & -\sum_{\ell=0}^{n-1}G_{\ell}\Delta^{\ell}
		\end{array}\right], \quad n=1,\ldots,d,
\end{equation}
with $\Delta$ the forward difference operator and with the understanding that $\Delta^0=\operatorname{id}$ and $\Delta^n=\Delta(\Delta^{n-1})$. By $G_n$ we denote the \emph{Gregory coefficients}, which are a well studied sequence in number theory, see e.g.\ \cite{candelpergher12,kowalenko10,merlini06}. They can be computed from the Stirling numbers of the first kind; see \Cref{fig:greg} for the first Gregory coefficients $G_n$, $n=0,\dots,6$.
We call $\G^{[n]}$ the \emph{$n$-th Gregory operator}.
\begin{table}
\centering
\begin{tabular}{c|c|c|c|c|c|c|c}
 n&0 &1 &2 &3&4&5&6 \\[0.1cm] \hline 
$G_n$& 1 & 1/2& -1/12& 1/24 & -19/720 & 3/160 & -863/60480\\
\end{tabular}
\caption{First few Gregory coefficients $G_n$.}
\label{fig:greg}
\end{table}
The main results of this paper, proved in \Cref{sec:proof}, are
\begin{thm}[Main result]\label{intro:main}
Let $S_{\A}$ be a subdivision operator \eqref{eq:intro_sd} satisfying the spectral condition of order $d\geq 1$. Then for $n=1,\ldots, d$ there exist subdivision operators $S_{\B^{[n]}}$ such that
\begin{equation}\label{nth_greg}
\G^{[n]} S_{\A}=2^{-n}S_{\B^{[n]}}\G^{[n]}, \quad n=1,\ldots, d.
\end{equation}
\end{thm}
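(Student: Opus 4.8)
The plan is to pass from the operators to their symbols and reduce the assertion to a polynomial divisibility statement. Writing $\A(z)=\sum_k\A_k z^k$ for the matrix-valued symbol of the mask, the subdivision operator acts on $z$-transforms by $\cb^*(z)\mapsto\A(z)\,\cb^*(z^2)$, whereas $\G^{[n]}$, which does not upsample, acts by multiplication with the matrix symbol
\begin{equation*}
\G^{[n]}(z)=\begin{bmatrix} 0 & \delta(z)^n \\ \delta(z) & -\sum_{\ell=0}^{n-1}G_\ell\,\delta(z)^\ell \end{bmatrix},
\end{equation*}
where $\delta(z)$ denotes the symbol of $\Delta$. Comparing symbols on both sides of \eqref{nth_greg} turns the claimed identity into $\G^{[n]}(z)\A(z)=2^{-n}\B^{[n]}(z)\,\G^{[n]}(z^2)$, so the whole content of the theorem is that
\begin{equation*}
\B^{[n]}(z):=2^{n}\,\G^{[n]}(z)\,\A(z)\,\G^{[n]}(z^2)^{-1}
\end{equation*}
is again a matrix Laurent polynomial, i.e.\ the mask of a genuine subdivision operator. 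First I would compute $\det\G^{[n]}(z)=-\delta(z)^{n+1}$ (note this is independent of the Gregory row), so that $\G^{[n]}(z^2)^{-1}$, and hence $\B^{[n]}$, can only have poles at the zeros $z=\pm1$ of $\delta(z^2)$. Everything thus reduces to showing that the numerator $\G^{[n]}(z)\,\A(z)\,\operatorname{adj}\G^{[n]}(z^2)$ carries a zero of order $n+1$ at these points.

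The decisive observation is to read the Gregory row through its generating function. Since $\sum_{\ell\ge0}G_\ell t^\ell=t/\log(1+t)$, and $\Delta=E-1$ with $E=e^{D}$ the shift and $D$ differentiation, the operator $\sum_{\ell\ge0}G_\ell\Delta^\ell$ equals $\Delta/\log(1+\Delta)=\Delta D^{-1}$, which converts the derivative component of a Hermite datum back into a first difference of its value component. Consequently the second row of $\G^{[n]}$ annihilates every consistent Hermite sample $(p,p')$ up to the truncation order, while the first row $[\,0,\ \Delta^n\,]$ annihilates $(p,p')$ as soon as $\deg p'<n$. Hence $\G^{[n]}$ annihilates, to the prescribed order, exactly the polynomial Hermite eigensequences of degree $\le n$. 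I would make this quantitative by sampling the eigensequences of \Cref{def:spectral} and checking, using the Stirling-number closed form of the $G_\ell$, that $\G^{[n]}(z)$ vanishes to the correct order at $z=1$ along these vectors.

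The spectral condition of order $d\ge n$ supplies the complementary ingredient: $S_{\A}$ reproduces the same eigensequences up to the dilation factors $2^{-\ell}$. Inserting this into $\G^{[n]}(z)\A(z)$ and using the dilation-by-two structure to relate $\G^{[n]}(z)$ at $z$ with $\G^{[n]}(z^2)$ at $z^2$, the spectral relations force the numerator to carry the full zero of order $n+1$ at $z=1$ required above, so $\B^{[n]}(z)$ is regular there. The behaviour at $z=-1$, where $\delta(z^2)$ also vanishes, is the remaining point to control; I expect it to follow either from further relations the spectral condition imposes on $\A$ or from a direct inspection of the block structure of $\G^{[n]}$, and I would treat it as a separate, more mechanical case.

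The main obstacle is the exact bookkeeping of vanishing orders at $z=1$: one must show that the truncated sum $\sum_{\ell=0}^{n-1}G_\ell\Delta^\ell$ reproduces $\Delta/\log(1+\Delta)$ accurately enough that the pole of order $n+1$ coming from $\det\G^{[n]}(z^2)$ is cancelled precisely, with no residual singularity and without overshooting the support bound on $\B^{[n]}$. This is exactly where the Stirling-number and Gregory-coefficient identities do the work, and where the spectral order $d$ must be weighed against $n$ term by term. Once this cancellation is verified for every $n\le d$, reading the symbol identity back as an operator identity yields \eqref{nth_greg} together with the finitely supported masks $\B^{[n]}$.
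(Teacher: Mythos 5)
Your reduction to symbols is sound: \eqref{nth_greg} is indeed equivalent to the identity $\G^{[n]}(z)\A^{\ast}(z)=2^{-n}\B^{[n]}(z)\,\G^{[n]}(z^{2})$ (this is exactly the relation behind \Cref{alg:general_ex}), your determinant computation $\det\G^{[n]}(z)=-\delta(z)^{n+1}$ is correct, and your generating-function reading of the Gregory row ($\sum_{\ell\ge0}G_\ell\Delta^{\ell}=\Delta/\log(1+\Delta)$, so the truncated row drops degree by $n+1$ on Hermite samples of polynomials) is precisely the mechanism the paper exploits through the identity \eqref{relation_stir_greg} and \Cref{prop:degree_q}. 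But the proposal stops where the proof has to start, at two places that are not routine. First, annihilation of the polynomial eigensequences of \Cref{def:spectral} by $\G^{[n]}$ does not, by itself, give divisibility of the matrix numerator $\G^{[n]}(z)\A^{\ast}(z)\operatorname{adj}\G^{[n]}(z^{2})$ by $\delta(z^{2})^{n+1}$: the eigensequences are not finitely supported, so they have no Laurent symbol, and ``$\G^{[n]}(z)$ vanishes to the correct order at $z=1$ along these vectors'' is not a well-formed statement about the symbol. To turn the spectral condition into vanishing orders you must first rewrite it as derivative conditions on $\A^{\ast}$ at $z=1$ \emph{and} $z=-1$ (the even/odd split of the eigenvalue relations; this is the sum-rule equivalence of Dubuc--Merrien and Han cited in the paper), and then carry out the entry-by-entry order count against the lower-order coefficients of the spectral polynomials $\fp_k$, which \Cref{def:spectral} leaves completely free. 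None of this bookkeeping --- which you yourself flag as ``the main obstacle'' --- is actually performed, so the central claim of the argument is asserted rather than proved.

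Second, and more seriously, the point $z=-1$ is not ``a separate, more mechanical case'' and does not follow from the block structure of $\G^{[n]}$: the required vanishing there encodes exactly the odd-indexed half of the eigenvalue relations, so it needs the spectral condition in the same essential way as $z=1$; for a mask without those relations the quotient genuinely has poles at $z=-1$. Deferring it leaves half of the proof open. It is instructive to compare with the paper, which never touches the $n$-step quotient at all: it factorizes one step at a time via the known vector factorization theorem (\Cref{thm:vector}), where each step only requires exhibiting a spanning vector of the eigenspace $\E_{\B^{[n-1]}}$, and the genuinely new content is the computation (\Cref{lem:recursion_pq}, \Cref{prop:p}, \Cref{prop:q}, \Cref{prop:degree_q}) that this eigenvector is $[1,G_{n-1}]^{T}$; the delicate divisibility at $z=\pm1$ is thereby delegated, one step at a time, to the cited theorem, and only the constants $p_0^{[n]}=1$, $q_0^{[n]}=G_n$ must be evaluated. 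Completing your one-shot symbol argument would amount to reproving that theorem for all $n$ steps simultaneously --- possible in principle, but a strictly harder calculation than the paper's, and the proposal as written has not done it.
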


We in addition show that the last factorization gives rise to an easy-to-check condition for the $C^d$-convergence of Hermite subdivision schemes:
\begin{cor}\label{intro:cor_main}
With notation as in \Cref{intro:main},
if $S_{\B^{[d]}}$ is contractive, then the Hermite subdivision scheme associated with $S_{\A}$ is $C^d$-convergent.
\end{cor}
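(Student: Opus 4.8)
The plan is to derive $C^d$-convergence from the single top-level factorization in \Cref{intro:main} taken at $n=d$, converting the algebraic identity \eqref{nth_greg} into a quantitative decay estimate and then reading off the smoothness of the limit. The starting point is to iterate the factorization: from $\G^{[d]} S_{\A}=2^{-d}S_{\B^{[d]}}\G^{[d]}$, a straightforward induction on the number of subdivision steps yields
\[
\G^{[d]} S_{\A}^{\,n}=2^{-nd}\bigl(S_{\B^{[d]}}\bigr)^{n}\G^{[d]},\qquad n\geq 1,
\]
so that for any initial data $\cb$ the transformed data $\G^{[d]} S_{\A}^{\,n}\cb$ equals $2^{-nd}$ times the $n$-fold application of $S_{\B^{[d]}}$ to the fixed sequence $\G^{[d]}\cb$.

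Next I would bring in contractivity. By hypothesis $S_{\B^{[d]}}$ is contractive, so there exist a constant $C>0$ and a rate $0<\mu<1$ with $\|(S_{\B^{[d]}})^{n}w\|_{\infty}\leq C\mu^{n}\|w\|_{\infty}$ for the relevant data $w$, in particular for $w=\G^{[d]}\cb$ lying in the range of $\G^{[d]}$ where the estimate is needed (passing to a fixed power of the operator if contractivity is only assumed for a single iterate). Combined with the iterated identity this gives the geometric bound
\[
\bigl\|\G^{[d]} S_{\A}^{\,n}\cb\bigr\|_{\infty}\leq C\,\mu^{n}\,2^{-nd}\,\bigl\|\G^{[d]}\cb\bigr\|_{\infty},
\]
so both components of $\G^{[d]} S_{\A}^{\,n}\cb$ decay strictly faster than $2^{-nd}$. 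Writing $S_{\A}^{\,n}\cb=(f^{[n]},g^{[n]})^{\top}$, those components are exactly $\Delta^{d}g^{[n]}$ and $\Delta f^{[n]}-\sum_{\ell=0}^{d-1}G_{\ell}\Delta^{\ell}g^{[n]}$. The first measures the $d$-th differences of the derivative data, while the second — through the Gregory weights, which are the coefficients of $\Delta/\log(1+\Delta)$ — measures the consistency between the function increments and the derivative data up to order $d$.

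Finally I would translate this decay into $C^d$-convergence. The factor $2^{-nd}$ from the iterated factorization is precisely the grid scaling attached to a $d$-th derivative, and the additional geometric gain $\mu^{n}$ from contractivity supplies the summability needed for uniform convergence; concretely one builds the candidate limit as the uniform limit of the piecewise interpolants of the rescaled data and controls its $d$-th derivative by the estimate above, showing the interpolants form a Cauchy sequence in $C^{d}$. The decay of $\Delta^{d}g^{[n]}$ forces the rescaled derivative data to converge to a $(d-1)$-times continuously differentiable function, while the vanishing of the second Gregory component guarantees that this limit is the derivative of the limit of the first component, giving a $C^{d}$ limit for the function data. I expect the main obstacle to be exactly this last bridge: one must verify that $\G^{[d]}$ genuinely plays the role of a complete difference/Taylor operator of order $d$ — that its kernel consists of the Hermite data sampled from polynomials of degree below $d$, and that control of $\G^{[d]} S_{\A}^{\,n}\cb$ controls all the differences relevant to the limit — so that the standard convergence-from-contractivity machinery for Hermite schemes applies. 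Once this identification is secured, the geometric decay feeds directly into that machinery and $C^{d}$-convergence follows.
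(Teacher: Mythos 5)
Your iteration identity $\G^{[d]}S_{\A}^{\,n}=2^{-nd}\bigl(S_{\B^{[d]}}\bigr)^{n}\G^{[d]}$ and the resulting geometric decay estimate are correct, but two things go wrong afterwards. First, a bookkeeping issue: the Hermite scheme's data is not $S_{\A}^{\,n}\cb^{[0]}$ but $\cb^{[n]}=\D^{-n}S_{\A}^{\,n}\cb^{[0]}$ (see \eqref{eq:Hermite_sds}), so the components you call $f^{[n]},g^{[n]}$ are the entries of $\D^{n}\cb^{[n]}$, i.e.\ the derivative data carries a hidden factor $2^{-n}$ relative to the sequence that must converge to $(\varphi,\varphi')$ in \Cref{def:convergent_hermite}; any direct argument has to track this renormalization. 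Second, and decisively, the step you yourself flag as ``the main obstacle'' --- passing from decay of the $\G^{[d]}$-transformed data to $C^d$-convergence --- is the entire content of the corollary, and there is no off-the-shelf ``convergence-from-contractivity machinery'' that accepts a single higher-order operator like $\G^{[d]}$ as input. The results available in the literature, and quoted in the paper, are all of first-order type: contractivity after \emph{one} difference factorization gives $C^0$-convergence of a vector scheme (\Cref{thm:converge_vector}, part 1), each further factorization lifts smoothness by one (part 2), and the Taylor factorization bridges vector and Hermite convergence (\Cref{thm:convergent_Hermite}). None of these applies to $\G^{[d]}$ directly when $d\geq 2$; asserting that the machinery ``applies'' once the kernel of $\G^{[d]}$ is identified is precisely the claim that needs proof.

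The paper closes this gap not by a direct analytic reconstruction but by exploiting the fact that $\G^{[d]}$ is not an arbitrary annihilation operator: it factors as the chain $\G^{[d]}=\Delta_{V^{[d-1]}}\cdots\Delta_{V^{[0]}}T$, and \Cref{prop:case0}, \Cref{thm_main} and \Cref{prop:greg_n} produce the whole tower of intermediate operators $S_{\B^{[0]}},\ldots,S_{\B^{[d]}}$ together with the eigenspace information (each $\E_{\B^{[n]}}$ is one-dimensional, spanned by $[1,G_n]^T$) that legitimizes each single step in the Charina--Conti--Sauer framework. With the tower in hand, \Cref{lem:iteration_hermite} does the rest: contractivity of the last operator gives convergence of the $(d-1)$-st vector scheme, part 2 of \Cref{thm:converge_vector} climbs the chain one derivative per level to give $C^{d-1}$-convergence of the Taylor-factorized scheme, and \Cref{thm:convergent_Hermite} converts that into $C^d$-convergence of the Hermite scheme. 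So the hypotheses of \Cref{intro:cor_main} implicitly carry much more structure than the single identity \eqref{nth_greg} you start from; if you insist on using only that identity, you would have to prove from scratch a convergence-from-contractivity theorem for $\G^{[d]}$ itself --- kernel analysis, reconstruction of the limit, control of the $\D^{-n}$ renormalization --- which would be a new result rather than an application of existing ones.
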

Furthermore, in \Cref{sec:ex}, we show that for primal schemes the spectral condition of order $d$ does not imply that polynomials up to degree $d$ are reproduced, while it is known that the reverse implication holds true \cite{conti14}. Up to now, these two concepts were conjectured to be equivalent.
\subsection{Impact of our results}
Factorization of subdivision operators for proving convergence/regularity of the associated subdivision scheme is a standard method in scalar subdivision \cite{dyn92}, vector subdivision \cite{charina05,cohen96,sauer02} and Hermite subdivision \cite{conti16,conti17,merrien12}. Nevertheless, the results for Hermite subdivision schemes are only concerned with factorizing \emph{once}, that is, with proving the \emph{minimal} regularity of the scheme (for example, in our case the minimal regularity is $1$ since we consider schemes dealing with function values and first derivatives), see e.g.\ \cite{merrien12}. Many authors, however, are interested in higher regularity than the minimal one \cite{conti14,han05,jeong17,moosmueller18}. We show in this paper that for Hermite schemes, the Gregory operators provide the necessary factorization tool to prove regularity higher than $1$.

It is worthwhile noting that \emph{every} Hermite subdivision operator satisfying the spectral condition of order $d$ can be factorized with respect to the Gregory operators. In general, for such an Hermite subdivision operator, there exist infinitely many possibilities to factorize \emph{beyond} the Taylor factorization (i.e.\ to prove regularity higher than the minimal one). This is due to the theory of factorizing vector schemes \cite{charina05,cohen96,sauer02}, which involves choosing an eigenvector of the vector subdivision operator, and completing this vector to a basis of $\RR^2$ (obviously, there are infinitely many ways to do this). Moreover, the choice of an eigenvector for the $(k+1)$-th factorization depends on the $k$-th factorization. This means that one can only factorize step-by-step, which drastically slows down computations. It also means that different Hermite schemes factorize with respect to very different operators. These facts can be seen from the computations in \cite{conti14,jeong17}.

We prove that the spectral condition guarantees the existence of \emph{one} factorization that works for \emph{all} Hermite subdivision operators. The key to this factorization is a clever choice of eigenvectors.

We would like to stress the improvement for computations arising from the Gregory factorization. In order to prove that a Hermite scheme is $C^d$-convergent, $d\geq 1$, up to now, $d$ factorization steps were necessary, see again \cite{conti14,jeong17}. As shown in \Cref{intro:cor_main}, we reduce this procedure to one single factorization: $n=d$ in \eqref{def:gregory_op} provides the operator with respect to which one has to factorize.

We mention that for $d=1$ the complete Taylor operator \cite{merrien12} and $\G^{[1]}$ provide the same tool for proving $C^1$-convergence for schemes. In this sense
the Gregory operators are direct extensions of the complete Taylor operator of dimension $2$. However, the Taylor operator is more powerful in proving the minimal regularity of a scheme, as it also works for schemes of general dimension $k, k\geq 2$, and for multivariate schemes. We thus consider the Gregory operators as a first step towards an extension of the Taylor operator for proving higher regularity than the minimal one.

Since the Stirling numbers and the Gregory coefficients are closely connected to higher-order finite differences, it is not too surprising that they appear in our construction. Nevertheless, we find it remarkable that the Gregory coefficients appear in such a natural manner and allow for a complete and easy description of the operators \eqref{def:gregory_op}.

\subsection{Organization of the paper}
The paper is organized as follows: The preliminary section (\Cref{sec:pre}) fixes the notation and recalls basic facts about subdivision schemes, factorization of subdivision operators, and the convergence of vector and Hermite schemes. \Cref{sec:forward_differences} introduces Stirling numbers and Gregory coefficients and discusses a recursion involving iterated forward differences. The main results are stated and proved in \Cref{sec:proof}. Examples of the Gregory factorization and of its use are provided in \Cref{sec:ex}. In this section we also show that the spectral condition does not imply the reproduction of polynomials.  \Cref{sec:conclusion} concludes the paper.

\section{Preliminaries}\label{sec:pre}
\subsection{Hermite subdivision schemes}\label{subsec:Hermite}

We denote by $\ell(\ZZ)^2$ the space of $\RR^2$-valued sequences $\cb=\left(\cb_j: j \in \ZZ\right)$, and by $\ell(\ZZ)^2_{\infty}$ the space of $\RR^2$-valued sequences with finite infinity-norm:
\begin{equation*}
\|\cb\|_{\infty}:=\sup_{j \in \ZZ}|\cb_j|_{\infty}<\infty,
\end{equation*}
where $|\cdot|_{\infty}$ is the infinity-norm on $\RR^2$.
Similarly, we define the space $\ell(\ZZ)^{2\times 2}$ of matrix-valued sequences $\A=\left(\A_j: j \in \ZZ \right)$, and the space $\ell(\ZZ)^{2\times 2}_{\infty}$ of all such sequences with finite infinity-norm:
\begin{equation*}
\|\A\|_{\infty}:=\sup_{j \in \ZZ}|\A_j|_{\infty}<\infty,
\end{equation*}
where $|\cdot|_{\infty}$ is the operator norm for matrices in $\RR^{2 \times 2}$ induced by the infinity-norm on $\RR^2$. We also consider the spaces $\ell(\ZZ)^{2}_{0}$ and $\ell(\ZZ)^{2\times 2}_{0}$ which consist of finitely supported vector resp. matrix sequences.
\begin{Def}[Subdivision operator]\label{def:subd_op}
A \emph{subdivision operator} with \emph{mask} $\A \in \ell(\ZZ)^{2\times 2}_{0}$ is the map $S_{\A}:\ell(\ZZ)^{2} \to \ell(\ZZ)^{2}$ defined by
\begin{equation}\label{eq:sdo}
\left(S_{\A}\cb\right)_j=\sum_{k \in \ZZ}\A_{j-2k}\cb_j, \quad \cb \in \ell(\ZZ)^{2},\, j \in \ZZ.
\end{equation}
\end{Def}
Note that due to the finite support of the mask $\A$, the sum in \Cref{def:subd_op} is finite. Furthermore, if $\cb \in \ell(\ZZ)^{2}_{\infty}$, then $S_{\A}\cb \in \ell(\ZZ)^{2}_{\infty}$. Therefore we can define the norm of a subdivision operator $S_{\A}$ by
\begin{equation*}
\|S_{\A}\|_{\infty}=\sup\{ \|S_{\A}\cb\|_{\infty}: \|\cb\|_{\infty}=1\}.
\end{equation*}
\begin{Def}[Hermite subdivision scheme]\label{def:hermite}
Let $S_{\A}$ be a subdivision operator \eqref{eq:sdo}. An \emph{Hermite subdivision scheme} is the iterative procedure of constructing vector-valued sequences by
\begin{equation}\label{eq:Hermite_sds}
\D^{n+1}\cb^{[n+1]}=S_{\A}\D^{n}\cb^{[n]},\quad n\in \NN,
\end{equation}
from initial data $\cb^{[0]} \in \ell(\ZZ)^{2}$. Here $\D$ denotes the diagonal matrix $\D=\operatorname{diag}\left(1, 1/2\right)$.
\end{Def}
\begin{Def}[Convergence of Hermite subdivision schemes]\label{def:convergent_hermite}
An Hermite subdivision scheme is $C^{d}$-convergent, $d \geq 1$, if for every input data $\cb^{[0]}\in \ell(\ZZ)^{2}_{\infty}$ and any compact $K \subset \mathbb{R}$, there exists a function $\varphi \in C^d(\RR)$ such that $\Phi=[\varphi,\varphi']^T: \RR \to \RR^2$ and the sequence $\cb^{[n]}$ defined by \eqref{eq:Hermite_sds} satisfies
\begin{equation}\label{eq:Hermite_convergence}
\lim_{n \to \infty}\sup_{j\in \ZZ\cap K}|\cb^{[n]}_j-\Phi\left(2^{-n}j\right)|_{\infty}=0.
\end{equation}
Furthermore, we request that there exists at least one $\cb^{[0]} \in \ell(\ZZ)_{\infty}^{2}$ such that $\varphi \neq 0$.
\end{Def}
The regularity of Hermite schemes is studied in many papers, see e.g.\ \cite{conti17,dubuc06,dubuc05,dyn95,dyn99,merrien12}.
Note that these papers are concerned with the \emph{minimal} regularity of an Hermite subdivision scheme (e.g.\ with regularity $1$).
Along the lines of \cite{conti14,han05,jeong17,moosmueller18}, we are interested in the regularity which is higher than one.

For a sequence $\cb$ we define the forward difference operator by
\begin{equation}\label{def:seq_forward_diff}
\left(\Delta \cb\right)_j=\cb_{j+1}-\cb_j, \quad j \in \ZZ.
\end{equation}
In analogy to \eqref{def:seq_forward_diff}, we define the forward difference operator for functions $f$ by
\begin{align}\label{def:delta}
\left(\Delta f\right)(x)=f(x +1)-f(x), \quad x \in \RR.
\end{align}
If $f$ is differentiable, we define the differential operator
\begin{align}\label{def:D}
 Df=f',
\end{align}
where we take the derivative component-wise.
By sampling $f$ on $\ZZ$, we obtain a vector-valued sequence $\cb_f=(f(j): j \in \ZZ)$. Since in this paper we are only concerned with sampled functions, we denote the sequence $\cb_f$ again by $f$. Therefore, by $S_{\A}f=g$ we mean $S_{\A}\cb_f=\cb_g$ for two functions $f,g$.
Note that this notation is consistent with the forward difference operators for functions and sequences:
\begin{equation}
(\Delta \cb_f)_j=(\Delta f)(j), \quad j \in \ZZ.
\end{equation}

We denote by $\Pi_k$ the set of polynomials with real coefficients of degree $\leq k$, $k \geq 0$. If $\pi \in \Pi_k$, then we write
\begin{equation*}
\pi(x)=\sum_{j=0}^k \pi[j]x^j,
\end{equation*}
that is, we denote the $j$-th coefficient of $\pi$ by $\pi[j] \in \RR$, $j=0,\ldots,k$.
\begin{Def}[Spectral condition]\label{def:spectral}
A subdivision operator $S_{\A}$ satisfies the \emph{spectral condition of order $d$}, $d\geq 1$, if there exist polynomials
$\fp_k \in \Pi_k$, $\fp_k[k]=1/k!$, such that
\begin{equation}\label{eq:spectral}
 S_{\A}\left[ \begin{array}{c}
               \fp_k\\
               D\fp_k
              \end{array}
	\right]
= 2^{-k}\left[ \begin{array}{c}
               \fp_k\\
               D\fp_k
              \end{array}
	\right],
\end{equation}
$k=0,\ldots, d$.
A subdivision operator satisfying the spectral condition of order $d$ is called \emph{Hermite subdivision operator of spectral order $d$}. The polynomials $\fp_k, k=0,\ldots, d$, are named \emph{spectral polynomials of $S_{\A}$}.
\end{Def}
\begin{Def}\label{def:reprod}
Let $S_{\A}$ be a subdivision operator. The Hermite subdivision scheme associated with $S_{\A}$ is said to \emph{reproduce a function $f\in C^{1}(\RR)$} if for initial data $\cb^{[0]}_j=[f(j),f'(j)]^T$, the iterated sequence $\cb^{[n]}$ defined by \eqref{eq:Hermite_sds} is given by $\cb^{[n]}_j=[f(2^{-n}j),f'(2^{-n}j)]^T$, $j\in \ZZ, n\geq 1$.
\end{Def}
The spectral condition was first introduced by \cite{dubuc09}, see also \cite{conti14,merrien12}.
In \cite{dubuc09} it is proved that the spectral condition is equivalent to a special \emph{sum rule} introduced by \cite{han03b,han05}. 
Note that in \Cref{def:reprod} we use the \emph{primal} parametrization, as opposed to dual or more general parametrizations which can be considered, see e.g.\ \cite{conti18,conti14}.
Furthermore, \cite{conti14} shows that reproduction of $\Pi_d$ implies the spectral condition of order $d$. 

The reverse implication was not yet clear, but we here put into evidence that it is actually false. Indeed, the primal Hermite scheme in \Cref{ex:H1} satisfies the spectral condition of order $d=4$ (for $\theta=1/32$), but polynomials of degree $4$ are not reproduced.

We mention that the spectral condition is a crucial property for the factorizability of an Hermite subdivision operator, but, as proved in \cite{merrien11,merrien18}, it is not necessary for convergence.

\subsection{Factorization of subdivision operators}
In order to discuss factorizations of Hermite subdivision operators and their connection to regularity higher than the minimal, we have to introduce vector subdivision schemes. The following part on vector subdivision schemes presented here is simplified and an adapted version of constructions and results from the general theory of vector subdivision schemes, see e.g.\ \cite{charina05,cohen96,micchelli98,sauer02} for details.
\begin{Def}[Vector subdivision scheme]
Let $S_{\B}$ be a subdivision operator \eqref{eq:sdo}. A \emph{vector subdivision scheme} is the iterative procedure of constructing vector-valued sequences by
\begin{equation}\label{eq:vector_sd}
\cb^{[n+1]}=S_{\B}\cb^{[n]},\quad n\in \NN,
\end{equation}
from initial data $\cb^{[0]}\in \ell(\ZZ)^2$.
\end{Def}
Note that an Hermite subdivision scheme is a \emph{level-dependent} case of vector subdivision, i.e.\ it can be generated by applying vector subdivision operators that vary with the level $n$: $S_{\B^{[n]}}=\D^{-(n+1)}S_{\A}D^{n}$. The crucial difference between Hermite and vector subdivision schemes lies in the definition of convergence:
\begin{Def}[Convergence of vector subdivision schemes]
A vector subdivision scheme is $C^{d}$-convergent, $d \geq 0$, if for every input data $\cb^{[0]}\in \ell(\ZZ)^{2}_{\infty}$ and any compact $K \subset \mathbb{R}$, there exists a vector-valued function $\Psi\in C^d(\RR,\RR^2)$ such that the sequence $\cb^{[n]}$ defined by \eqref{eq:vector_sd} satisfies
\begin{equation}\label{eq:vector_convergence}
\lim_{n \to \infty}\sup_{j\in \ZZ\cap K}|\cb^{[n]}_j-\Psi\left(2^{-n}j\right)|_{\infty}=0,
\end{equation}
and there exists at least one $\cb^{[0]} \in \ell(\ZZ)^{2}_{\infty}$ such that $\Psi \neq 0$. $C^0$-convergent vector schemes are simply called ``convergent''.
\end{Def}
Following \cite{micchelli98}, for a mask $\B$, we define $\E_{\B}$ by
\begin{equation*}
 \E_{\B}=\{v \in \RR^2: \sum_{j\in \ZZ}\B_{2j}v=v,\, \sum_{j\in \ZZ}\B_{2j+1}v=v\}.
\end{equation*}
It is well-known that the convergence of the vector subdivision scheme associated with $S_{\B}$ implies that there exists $v \neq 0$ such that $v\in\E_{\B}$.
The following is clear from the definition of $\E_{\B}$:
\begin{lem}
 Let $\B$ be a mask. Let $v \in \RR^2$. Then the following are equivalent:
 \begin{enumerate}
  \item $v \in \E_{\B}$,
  \item $S_{\B}v=v$.
 \end{enumerate}
 As in \Cref{subsec:Hermite}, we identify the constant function $v$ with the constant sequence $\cb_v=\left(v: j\in \ZZ\right)$.
\end{lem}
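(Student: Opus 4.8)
The plan is to simply unwind the definition of the subdivision operator applied to the constant sequence $\cb_v$ and then split the defining sum according to the parity of the output index $j$. Since everything in sight is finitely supported, no analytic estimates are required; the lemma is a pure bookkeeping statement, which is why the authors flag it as ``clear.''

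First I would write out, for the constant sequence $\cb_v=(v:j\in\ZZ)$,
\begin{equation*}
(S_{\B}\cb_v)_j=\sum_{k\in\ZZ}\B_{j-2k}v,\qquad j\in\ZZ,
\end{equation*}
using that every entry of $\cb_v$ equals $v$. The one observation that does the work is that, as $k$ ranges over $\ZZ$, the index $j-2k$ ranges over exactly the integers of the same parity as $j$: it runs through all even integers when $j$ is even, and through all odd integers when $j$ is odd. Consequently the sum collapses to $\bigl(\sum_{i\in\ZZ}\B_{2i}\bigr)v$ for even $j$ and to $\bigl(\sum_{i\in\ZZ}\B_{2i+1}\bigr)v$ for odd $j$, both finite because $\B\in\ell(\ZZ)^{2\times 2}_0$. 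In particular $(S_{\B}\cb_v)_j$ depends only on the parity of $j$.

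With this computation in hand the equivalence is immediate. Statement (2), namely $S_{\B}v=v$, means $(S_{\B}\cb_v)_j=v$ for every $j\in\ZZ$; by the parity split this holds precisely when the two equations $\sum_{i}\B_{2i}v=v$ and $\sum_{i}\B_{2i+1}v=v$ hold simultaneously, which is exactly the defining condition of $v\in\E_{\B}$, i.e.\ statement (1). The only point requiring (minor) care is the re-indexing $\{j-2k:k\in\ZZ\}=2\ZZ$ for even $j$ and $=2\ZZ+1$ for odd $j$; beyond this there is no real obstacle, as the argument involves no convergence or regularity considerations.
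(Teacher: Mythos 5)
Your proof is correct: the paper itself offers no argument, stating the lemma is ``clear from the definition of $\E_{\B}$,'' and your parity-splitting computation of $(S_{\B}\cb_v)_j$ is precisely the routine verification being alluded to. Note also that your re-indexing makes the lemma insensitive to the (apparent) typo $\cb_j$ versus $\cb_k$ in the paper's equation \eqref{eq:sdo}, since for a constant sequence both readings coincide.
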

Therefore the space $\E_{\B}$ is the space of all eigenvectors (constant sequences) of $S_{\B}$ with respect to the eigenvalue $1$.

In this paper we are only concerned with masks $\B$ with $\dim \E_{\B}=1$. Following \cite{charina05}, we call a matrix $V$ an \emph{$\E_{\B}$-generator}, if $V=[v,w]$, where $v\neq 0$ spans $\E_{\B}$, and $v$ and $w$ are linearly independent.

We now introduce a generalization of the forward difference operator $\Delta$ for vector schemes. Let $V$ be an invertible matrix. Define $\Delta_V$ by
\begin{equation}\label{eq:delta_V}
 \Delta_V=\left[
	  \begin{array}{cc}
	   \Delta & 0 \\
	   0 & 1
	  \end{array}
	\right]V^{-1}.
\end{equation}
In \cite{charina05}, the matrix $V$ is assumed to be orthogonal. We choose a slightly more general approach, which, however, does not change the validity of the results below.
From \cite{charina05,micchelli98,sauer02} we have the following result concerning the factorization of subdivision operators:
\begin{thm}\label{thm:vector}
 Let $S_{\B}$ be a subdivision operator \eqref{eq:sdo} and assume that $\di \E_{\B}=1$. For an $\E_{\B}$-generator $V$ there
 exists a subdivision operator $S_{\C}$ such that
 \begin{equation*}
  \Delta_{V}S_{\B}=2^{-1}S_{\C}\Delta_{V}.
 \end{equation*}
Furthermore $\di\E_{\C}=1$ or $\E_{\C}=\{0\}$.
\end{thm}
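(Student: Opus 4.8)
The plan is to pass to generating functions (symbols) and reduce the operator identity to an identity of matrix-valued Laurent polynomials. For a mask $\B$ write its symbol $\B(z)=\sum_{j}\B_j z^j$; then $S_{\B}$ acts on symbols by $(S_{\B}\cb)(z)=\B(z)\cb(z^2)$, while the forward difference acts by multiplication with $\delta(z)=z^{-1}-1$. Consequently $\Delta_V$ acts by $\cb(z)\mapsto D(z)V^{-1}\cb(z)$, where $D(z)=\operatorname{diag}(\delta(z),1)$. Writing $\Delta_V S_{\B}=2^{-1}S_{\C}\Delta_V$ in symbols and using that $\cb$ is arbitrary, the identity becomes $D(z)V^{-1}\B(z)=2^{-1}\C(z)D(z^2)V^{-1}$, so the only possible candidate is
\begin{equation*}
\C(z)=2\,D(z)\,V^{-1}\B(z)V\,D(z^2)^{-1}.
\end{equation*}
The whole theorem then splits into two tasks: showing that this $\C(z)$ is a genuine (Laurent-polynomial) mask, and analysing $\E_{\C}$.

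For existence I would set $M(z)=V^{-1}\B(z)V$ and locate the only possible obstructions, namely the simple poles of $D(z^2)^{-1}$ at $z=\pm1$. The key input is the meaning of $v\in\E_{\B}$: summing the even- and odd-indexed mask coefficients gives $\B(1)v=2v$ and $\B(-1)v=0$. Since $V e_1=v$, this says precisely that the first column of $M(1)$ equals $2e_1$ and the first column of $M(-1)$ vanishes, i.e. $M_{11}(1)=2$, $M_{21}(1)=0$, and $M_{11}(-1)=M_{21}(-1)=0$. An entrywise computation using $\delta(z)/\delta(z^2)=z/(1+z)$ then gives $\C_{11}(z)=2\tfrac{z}{1+z}M_{11}(z)$, $\C_{21}(z)=2\tfrac{z^2}{1-z^2}M_{21}(z)$, $\C_{12}(z)=2\delta(z)M_{12}(z)$ and $\C_{22}(z)=2M_{22}(z)$; the three vanishings recorded above cancel exactly the three poles (at $z=-1$ in $\C_{11}$, and at $z=\pm1$ in $\C_{21}$), while $\C_{12},\C_{22}$ are automatically Laurent polynomials. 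Hence $\C(z)$ is a Laurent polynomial, so $\C$ is finitely supported and $S_{\C}$ is a bona fide subdivision operator, establishing the factorization.

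For the statement on $\E_{\C}$ I would use the same even/odd summation, which characterises $u\in\E_{\C}$ by $\C(1)u=2u$ together with $\C(-1)u=0$. In particular $\E_{\C}\subseteq\ker\C(-1)$, so it suffices to prove $\C(-1)\neq 0$: then $\operatorname{rank}\C(-1)\ge 1$ forces $\di\ker\C(-1)\le 1$, whence $\di\E_{\C}\le 1$, i.e. $\di\E_{\C}=1$ or $\E_{\C}=\{0\}$. To show $\C(-1)\neq 0$ I would evaluate the removable singularities of the factorization formula at $z=-1$, expressing the entries of $\C(-1)$ through the values and first derivatives of $M$ there, and then argue that $\C(-1)=0$ would force a second vector independent of $v$ to lie in $\E_{\B}$, contradicting $\di\E_{\B}=1$. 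I expect this to be the main obstacle: because the surviving entries of $\C(\pm1)$ arise from removable singularities, they depend on $M'(\pm1)$ rather than just $M(\pm1)$, so controlling the rank of $\C(-1)$ is delicate and is exactly the point where the full hypothesis $\di\E_{\B}=1$ (and not merely $v\in\E_{\B}$) must be brought to bear.
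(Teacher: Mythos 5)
There is no in-paper proof to compare you against: the paper imports \Cref{thm:vector} wholesale from the vector-subdivision literature \cite{charina05,micchelli98,sauer02} (explicitly describing its presentation as a simplified adaptation), so your attempt has to be judged on its own merits. The existence half of your argument is correct and complete: the symbol identity forces $\C(z)=2D(z)V^{-1}\B(z)VD(z^2)^{-1}$, the condition $v\in\E_{\B}$ is exactly $\B(1)v=2v$ and $\B(-1)v=0$, and via $V[1,0]^T=v$ these are precisely the vanishings of the first column of $M(z)=V^{-1}\B(z)V$ at $z=\pm1$ needed to cancel, entry by entry, the poles of $D(z^2)^{-1}$. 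This is the standard argument, and it is the same symbol calculus the paper itself uses in \Cref{alg:general_ex}. Note that it uses only $v\in\E_{\B}\setminus\{0\}$ and the invertibility of $V$, never that $v$ spans $\E_{\B}$.

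The ``furthermore'' clause is where your proposal has a genuine gap, and the gap cannot be closed along the route you sketch --- nor along any other route from the stated hypotheses. Your reduction is sound ($\E_{\C}\subseteq\ker\C(-1)$, so $\C(-1)\neq0$ would give $\di\E_{\C}\leq1$), but $\C(-1)\neq0$ simply does not follow from $\di\E_{\B}=1$. Concretely, take the diagonal mask $\B_0=\operatorname{diag}(\tfrac12,\tfrac12)$, $\B_1=\operatorname{diag}(1,\tfrac12)$, $\B_2=\operatorname{diag}(\tfrac12,0)$, with symbol
\begin{equation*}
\B^{\ast}(z)=\operatorname{diag}\left(\tfrac12(1+z)^2,\ \tfrac12(1+z)\right),
\end{equation*}
and $V=\operatorname{diag}(1,1)$. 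Then $\sum_j\B_{2j}=\sum_j\B_{2j+1}=\operatorname{diag}(1,\tfrac12)$, so $\E_{\B}=\spa\{[1,0]^T\}$ has dimension one and $V$ is an $\E_{\B}$-generator. Since $\Delta_V$ is surjective (as $\Delta$ is onto $\ell(\ZZ)$), the factoring mask is unique, and your formula (or a two-line scalar check, both masks being diagonal) gives $\C^{\ast}(z)=\operatorname{diag}\left(z(1+z),\,1+z\right)$, i.e.\ $\C_0=\operatorname{diag}(0,1)$, $\C_1=\operatorname{diag}(1,1)$, $\C_2=\operatorname{diag}(1,0)$. Here $\C^{\ast}(-1)=0$ and $\C^{\ast}(1)=2\operatorname{diag}(1,1)$, hence $\sum_j\C_{2j}=\sum_j\C_{2j+1}=\operatorname{diag}(1,1)$ and $\E_{\C}=\RR^2$. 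So both your sufficient condition and the claimed conclusion $\di\E_{\C}\leq 1$ fail: with only the hypotheses stated in this paper, the final clause of \Cref{thm:vector} is not just hard to prove but false, and in the cited sources it must rest on additional assumptions or conventions not reproduced here. Your closing suspicion --- that the surviving entries of $\C(\pm1)$ involve $M'(\pm1)$, which $\di\E_{\B}=1$ does not control --- identified exactly the right obstruction; the correct conclusion to draw from it is that this part of the statement is unprovable as given, not merely delicate.
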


From \cite[Corollaries 5 and 8]{charina05} we obtain
\begin{thm}\label{thm:converge_vector}
With assumption and notation as in \Cref{thm:vector}, we have
\begin{enumerate}
\item\label{it:converge} If $\|(2^{-1}S_{\C})^N\|_{\infty}<1$, for some $N\geq 1$, that is, if $2^{-1}S_{\C}$ is \emph{contractive}, then the vector subdivision scheme associated with $S_{\B}$ is convergent.
\item\label{it:smooth} If the vector scheme associated with $S_{\C}$ is $C^{d}$-convergent, then the vector scheme associated with $S_{\B}$ is $C^{d+1}$-convergent, $d\geq 0$.
\end{enumerate}
\end{thm}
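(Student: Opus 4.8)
The plan is to deduce both assertions from the single factorization identity of \Cref{thm:vector} by iterating it, and then to invoke the classical correspondence between the decay (resp.\ smoothness) of generalized differences and the convergence (resp.\ higher smoothness) of the scheme itself. First I would iterate $\Delta_V S_{\B}=2^{-1}S_{\C}\Delta_V$; a straightforward induction gives
\begin{equation*}
\Delta_V S_{\B}^n=(2^{-1}S_{\C})^n\Delta_V=2^{-n}S_{\C}^n\Delta_V,\quad n\geq 1,
\end{equation*}
so that for $\cb^{[n]}=S_{\B}^n\cb^{[0]}$ the generalized differences satisfy $\Delta_V\cb^{[n]}=2^{-n}S_{\C}^n(\Delta_V\cb^{[0]})$. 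Writing $\mathbf{d}^{[n]}=V^{-1}\cb^{[n]}$ with scalar components $\mathbf{d}_1^{[n]},\mathbf{d}_2^{[n]}$, the definition \eqref{eq:delta_V} splits $\Delta_V\cb^{[n]}$ into the ordinary forward difference $\Delta\mathbf{d}_1^{[n]}$ of the eigenvector coordinate and the plain value $\mathbf{d}_2^{[n]}$ of the transversal coordinate; controlling these two pieces is what the remaining work amounts to.

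For the first assertion, contractivity means $\|(2^{-1}S_{\C})^N\|_{\infty}=:\rho<1$ for some $N$, so by submultiplicativity over blocks of length $N$ one obtains $\|\Delta_V\cb^{[n]}\|_{\infty}\leq C\lambda^n$ with $\lambda=\rho^{1/N}<1$. Hence both $\Delta\mathbf{d}_1^{[n]}$ and $\mathbf{d}_2^{[n]}$ decay geometrically. I would then run the classical difference-scheme argument: interpret $\mathbf{d}_1^{[n]}$ as the values of a piecewise-linear function $g_n$ at the dyadic nodes $2^{-n}j$, and estimate $\|g_{n+1}-g_n\|_{\infty}$ by a fixed multiple of $\|\Delta\mathbf{d}_1^{[n]}\|_{\infty}$. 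The geometric bound makes $(g_n)$ a Cauchy sequence in $C^0$, hence uniformly convergent on compacts to a continuous $g$; the vanishing of $\mathbf{d}_2^{[n]}$ shows that the transversal part dies out, so $\cb^{[n]}\to g\,v$ with $v$ spanning $\E_{\B}$. Nontriviality follows by choosing $\cb^{[0]}$ so that $g\not\equiv 0$.

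For the second assertion, I would instead use the form $\Delta_V\cb^{[n]}=2^{-n}S_{\C}^n(\Delta_V\cb^{[0]})$. By hypothesis the vector scheme $S_{\C}$ is $C^d$-convergent, so with initial data $\Delta_V\cb^{[0]}$ the iterates converge, after dyadic rescaling, to some $\Psi_{\C}\in C^d(\RR,\RR^2)$. Reading off components, the transversal coordinate obeys $\mathbf{d}_2^{[n]}\approx 2^{-n}(\Psi_{\C})_2(2^{-n}\cdot)\to 0$, while the eigenvector coordinate obeys $2^{n}\Delta\mathbf{d}_1^{[n]}\approx(\Psi_{\C})_1(2^{-n}\cdot)$. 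The left-hand side is exactly the dyadic difference quotient of $\mathbf{d}_1^{[n]}$, so its convergence to the $C^d$ function $(\Psi_{\C})_1$ is the discrete statement that the limit $g$ of $\mathbf{d}_1^{[n]}$ has derivative $(\Psi_{\C})_1$. Reconstructing $g$ as an antiderivative, the additive constant being fixed through the eigenvector direction, one obtains $g\in C^{d+1}$ and $\cb^{[n]}\to g\,v$ in the sense of $C^{d+1}$-convergence.

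The routine parts are the induction and the component bookkeeping; the main obstacle is the passage from the generalized differences back to the scheme, namely making rigorous the ``integration'' step that turns geometric decay (first assertion) resp.\ $C^d$-convergence (second assertion) of $\Delta_V\cb^{[n]}$ into $C^0$ resp.\ $C^{d+1}$-convergence of $\cb^{[n]}$. Here one must control the interpolants uniformly, verify that the difference quotients converge together with the correct derivative, and carefully account for the splitting into the constant-reproducing eigenvector direction and the decaying transversal direction; this is precisely the technical content imported from \cite{charina05}.
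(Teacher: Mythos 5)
Your overall strategy (iterate the factorization identity, split $\Delta_V\cb^{[n]}$ into the eigen-coordinate differences and the transversal coordinate, then ``integrate'' back) is the standard route, and it is worth knowing that the paper itself does not prove this theorem: it imports both parts from \cite[Corollaries 5 and 8]{charina05}, and its only original content is the remark immediately after the statement, namely that the hypothesis ``$S_{\B}$ is convergent'' appearing in Corollary 8 of \cite{charina05} can be dropped, because convergence of the vector scheme $S_{\C}$ forces contractivity of $2^{-1}S_{\C}$, so that part 1 already yields convergence of $S_{\B}$. Measured against this, your part 1 is essentially correct in outline, with one misstated estimate: $\|g_{n+1}-g_n\|_{\infty}$ cannot be bounded by a multiple of $\|\Delta\mathbf{d}_1^{[n]}\|_{\infty}$ alone. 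The values $\mathbf{d}_1^{[n+1]}$ arise from applying $S_{\B}$ to the \emph{full} vector data, and only the direction $v$ is reproduced; the deviation of $\cb^{[n+1]}$ near index $2j$ from $\mathbf{d}_1^{[n]}_j v$ is controlled by $\|\Delta_V\cb^{[n]}\|_{\infty}$, i.e.\ by both $\|\Delta\mathbf{d}_1^{[n]}\|_{\infty}$ and $\|\mathbf{d}_2^{[n]}\|_{\infty}$. Under the contractivity hypothesis both pieces decay geometrically, so your conclusion survives, but the inequality as written is false in general.

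Part 2 contains a genuine gap, and it sits precisely at the point the paper's remark is designed to handle. You refer to ``the limit $g$ of $\mathbf{d}_1^{[n]}$'' and propose to fix ``the additive constant through the eigenvector direction'', but nothing in your hypotheses guarantees that this limit exists: the data $\Delta_V\cb^{[n]}$ is blind to the constant eigen-mode, since adding the constant sequence $c\,v$ to $\cb^{[n]}$ shifts $\mathbf{d}_1^{[n]}$ by $c$ while leaving $\Delta_V\cb^{[n]}$ unchanged, and this mode is preserved by $S_{\B}$ with eigenvalue $1$. So the difference quotients determine $\mathbf{d}_1^{[n]}$ only up to a level-dependent constant, and a priori the sequence of values could fail to converge; this is exactly why \cite[Corollary 8]{charina05} carries the extra assumption that $S_{\B}$ is convergent. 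To close the gap you must first produce $g$, either (i) by the paper's route: show that $C^0$-convergence of $S_{\C}$ implies $\|(2^{-1}S_{\C})^N\|_{\infty}<1$ for some $N$ (a finite-window argument using the finite support of the mask and shift-invariance, since pointwise decay of a sequence of finite matrices implies norm decay), and then apply your part 1; or (ii) by a local Cauchy argument: on a compact set, $C^0$-convergence of $S_{\C}$ bounds $\mathbf{e}^{[n]}=S_{\C}^n\Delta_V\cb^{[0]}$, so $\Delta_V\cb^{[n]}=2^{-n}\mathbf{e}^{[n]}$ decays locally like $2^{-n}$, and the corrected two-component value-propagation estimate from part 1 shows $\mathbf{d}_1^{[n]}$ is uniformly Cauchy on compacts. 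Only once $g$ exists does your difference-quotient step legitimately upgrade it to $g\in C^{d+1}$ with $Dg=(\Psi_{\C})_1$.
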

Note that \cite{charina05} shows stronger results than the ones mentioned above. We only need these special cases.
Furthermore, in part \ref{it:smooth} of the theorem we dropped the assumption that $S_{\B}$ is convergent. This is possible due to the following reason: If $S_{\C}$ is convergent, then $2^{-1}S_{\C}$ is contractive, and thus by part \ref{it:converge} of the theorem, $S_{\B}$ converges.

Note that in order to show $C^{d}$-convergence of a vector subdivision scheme, there are infinitely many ways to factorize $S_{\B}$, respectively to obtain an operator $S_{\C}$. Nevertheless, in \cite{charina05} it is shown that if $2^{-1}S_{\C}$, coming from a factorization with respect to a matrix $V$, is contractive, then the operator $2^{-1}S_{\mathbf{E}}$ obtained from any other valid factorization is also contractive. Therefore, the choice of $V$ is irrelevant for proving convergence from contractivity.

As in \cite{charina05}, we may iterate \Cref{thm:vector}, to conclude the following: 

\begin{lem}\label{lem:iteration_vector}
Let $S_{\B}$ be a subdivision operator with $\dim\E_{\B}=1$. Let $\C^{[0]}=\B$ and let $V^{[0]}$ be an $\E_{\C^{[0]}}$-generator. If for $k\geq 0$ and $n=1,\ldots,k+1$ there exist matrices $V^{[n]}$ and masks ${\C^{[n]}}$ such that $V^{[n]}$ is an $\E_{\C^{[n]}}$-generator and such that
\begin{equation}\label{eq:iteration_vector}
\Delta_{V^{[n]}}\cdots \Delta_{V^{[0]}}S_{\B}=2^{-(n+1)}S_{\C^{[n+1]}}\Delta_{V^{[n]}}\cdots \Delta_{V^{[0]}}, \quad n=0,\ldots,k,
\end{equation}
and $2^{-1}S_{\C^{[k+1]}}$ is contractive, then the vector scheme associated with $S_{\B}$ is $C^{k}$-convergent.
\end{lem}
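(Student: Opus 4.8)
The plan is to prove this by induction on $k$, using \Cref{thm:vector} as the single-step engine and \Cref{thm:converge_vector} to pass from contractivity back up to convergence. The statement packages together two separate facts: first, that the chained factorization \eqref{eq:iteration_vector} can be established one step at a time; and second, that contractivity of the final operator $2^{-1}S_{\C^{[k+1]}}$ forces $C^k$-convergence of the original scheme $S_{\B}$. Since the hypotheses already assume the existence of the matrices $V^{[n]}$, the masks $\C^{[n]}$, and the factorization relations \eqref{eq:iteration_vector} for $n=0,\ldots,k$, the genuine content to prove is the convergence conclusion; the factorization relations themselves are given to us and follow step-by-step from repeated application of \Cref{thm:vector} together with the preservation property $\di\E_{\C}=1$ or $\E_{\C}=\{0\}$ that guarantees each successive $\E_{\C^{[n]}}$-generator $V^{[n]}$ exists (as long as the dimension stays equal to $1$, which is what lets the induction continue).

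First I would set up the induction on $k$. For the base case $k=0$, the hypothesis reduces to $\Delta_{V^{[0]}}S_{\B}=2^{-1}S_{\C^{[1]}}\Delta_{V^{[0]}}$ together with $2^{-1}S_{\C^{[1]}}$ contractive; by part \ref{it:converge} of \Cref{thm:converge_vector}, contractivity of $2^{-1}S_{\C^{[1]}}$ immediately yields that the vector scheme associated with $S_{\B}$ is convergent, i.e.\ $C^0$-convergent, which is the $k=0$ claim. For the inductive step, I would read the last relation in \eqref{eq:iteration_vector} as a factorization of the operator $T:=\Delta_{V^{[k-1]}}\cdots\Delta_{V^{[0]}}S_{\B}$: the equations for $n=k-1$ and $n=k$ together exhibit the scheme associated with the mask $\C^{[k]}$ as factorizing (via $\Delta_{V^{[k]}}$) into $2^{-1}S_{\C^{[k+1]}}$. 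Contractivity of $2^{-1}S_{\C^{[k+1]}}$ gives, by part \ref{it:converge} of \Cref{thm:converge_vector}, that $S_{\C^{[k]}}$ is $C^0$-convergent. Then I would peel off the factorizations one at a time from the bottom up, applying part \ref{it:smooth} of \Cref{thm:converge_vector} at each stage: $C^0$-convergence of $S_{\C^{[k]}}$ upgrades to $C^1$-convergence of $S_{\C^{[k-1]}}$, which upgrades to $C^2$-convergence of $S_{\C^{[k-2]}}$, and so on, so that after $k$ such upgrades one reaches $C^k$-convergence of $S_{\C^{[0]}}=S_{\B}$.

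The cleanest way to organize the bookkeeping is probably to state and prove the auxiliary claim that for each $m=0,\ldots,k$, the scheme associated with $\C^{[m]}$ is $C^{k-m}$-convergent, by descending induction on $m$ with $m=k$ as the base case and $m=0$ giving the result; the inductive step is exactly one invocation of part \ref{it:smooth} of \Cref{thm:converge_vector}, which is legitimate because \eqref{eq:iteration_vector} supplies the factorization $\Delta_{V^{[m]}}S_{\C^{[m]}}=2^{-1}S_{\C^{[m+1]}}\Delta_{V^{[m]}}$ relating consecutive masks. The main obstacle I anticipate is not analytic but notational: one must verify that the single-step relation $\Delta_{V^{[m]}}S_{\C^{[m]}}=2^{-1}S_{\C^{[m+1]}}\Delta_{V^{[m]}}$ genuinely follows from the chained form \eqref{eq:iteration_vector} by ``cancelling'' the common prefix $\Delta_{V^{[m-1]}}\cdots\Delta_{V^{[0]}}$, and to apply part \ref{it:smooth} of \Cref{thm:converge_vector} one needs each $V^{[m]}$ to be a genuine $\E_{\C^{[m]}}$-generator (hence each $\di\E_{\C^{[m]}}=1$), which is exactly what the hypotheses of the lemma grant us. Once the indexing is made precise, each step is a direct citation of \Cref{thm:converge_vector}, and no further computation is required.
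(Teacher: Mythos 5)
Your proposal is correct and follows essentially the same route as the paper, which states this lemma precisely as the result of iterating \Cref{thm:vector}: your descending induction (contractivity gives $C^0$-convergence of the scheme for $\C^{[k]}$ via part~\ref{it:converge} of \Cref{thm:converge_vector}, then one derivative is gained per level via part~\ref{it:smooth} until reaching $S_{\C^{[0]}}=S_{\B}$) is exactly that iteration made explicit. The prefix-cancellation step you flag is indeed valid: each $\Delta_{V^{[n]}}$ is surjective on $\ell(\ZZ)^2$ (since $\Delta$ is surjective on $\ell(\ZZ)$ and $V^{[n]}$ is invertible), so the chained relations \eqref{eq:iteration_vector} do collapse to the single-step factorizations $\Delta_{V^{[m]}}S_{\C^{[m]}}=2^{-1}S_{\C^{[m+1]}}\Delta_{V^{[m]}}$ needed to invoke \Cref{thm:converge_vector}.
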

Our construction of the $n$-th Gregory operator \eqref{def:gregory_op} for Hermite subdivision operators relies heavily on the iteration \eqref{eq:iteration_vector}.

We now continue with Hermite subdivision operators. Denote by $T$ the \emph{Taylor operator} of dimension 2,
\begin{equation*}
 T=\left[
 \begin{array}{cr}
  \Delta & -1 \\
  0 & 1
 \end{array}
  \right],
\end{equation*}
which was first defined in \cite{merrien12} for the convergence and smoothness analysis of Hermite schemes. We have the following results from \cite{merrien12}:
\begin{thm}\label{thm:Taylor}
 If $S_{\A}$ is an Hermite subdivision operator of spectral order at least $1$, then there exists a subdivision operator $S_{\B}$ such that
 \begin{equation*}
  TS_{\A}=2^{-1}S_{\B}T.
 \end{equation*}
 Also, $\E_{\B}$ is spanned by $[0,1]^T$. In particular, $\dim\E_{\B}=1$.
\end{thm}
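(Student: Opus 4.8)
The plan is to pass to generating functions and reduce the operator identity to a relation between Laurent-polynomial matrices. For a mask $\A$ set $\A(z)=\sum_{j\in\ZZ}\A_j z^j$; with the convention $\cb(z)=\sum_j\cb_j z^j$ the operator $S_\A$ acts by $\cb(z)\mapsto\A(z)\cb(z^2)$, the forward difference $\Delta$ becomes multiplication by $z^{-1}-1$, and the Taylor operator becomes the matrix symbol $T(z)=\left[\begin{smallmatrix}z^{-1}-1&-1\\0&1\end{smallmatrix}\right]$. In these terms $TS_\A=2^{-1}S_\B T$ is equivalent to the symbol identity $T(z)\A(z)=2^{-1}\B(z)T(z^2)$. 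Since $\det T(z^2)=z^{-2}-1\not\equiv0$, this determines the only possible candidate, $\B(z)=2\,T(z)\A(z)\,T(z^2)^{-1}$, and the entire content of the existence statement is that this \emph{a priori} rational matrix is in fact a Laurent polynomial, i.e.\ that $\B$ is a finitely supported mask.

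The factor $z^{-2}-1$ has simple zeros at $z=\pm1$, so $T(z^2)^{-1}$ has simple poles there. Writing $T(z)\A(z)=\left[\begin{smallmatrix}M_{11}&M_{12}\\M_{21}&M_{22}\end{smallmatrix}\right]$, a short computation shows that the poles of $\B(z)$ are carried entirely by $M_{11}$ and $M_{21}$, so $\B$ is polynomial precisely when these two entries vanish at $z=\pm1$. This is exactly what the spectral condition supplies. The $k=0$ case of \Cref{def:spectral} says that $S_\A$ fixes the constant sequence $[1,0]^T$, which in symbol form reads $\A(1)[1,0]^T=2[1,0]^T$ and $\A(-1)[1,0]^T=0$; writing $\A(z)=\left[\begin{smallmatrix}a&b\\c&d\end{smallmatrix}\right]$ these give $a(1)=2,\ c(1)=0,\ a(-1)=0,\ c(-1)=0$. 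Since $M_{21}=c$ and $M_{11}=(z^{-1}-1)a-c$, both entries vanish at $z=\pm1$ and the simple poles cancel. I expect this bookkeeping of the removable singularities to be the main obstacle, as one must track the cancellation both at $z=1$ (where the factor $z^{-1}-1$ already helps) and at $z=-1$ (where the relation $\A(-1)[1,0]^T=0$ is essential).

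For the eigenvector statement I would argue directly on sequences. Because $\fp_1[1]=1$ we have $\Delta\fp_1\equiv1=D\fp_1$, and therefore $T[\fp_1,D\fp_1]^T=[\Delta\fp_1-D\fp_1,\,D\fp_1]^T=[0,1]^T$, the constant sequence. Applying the factorization to the $k=1$ spectral eigensequence and using $S_\A[\fp_1,D\fp_1]^T=2^{-1}[\fp_1,D\fp_1]^T$ gives
\[
2^{-1}S_\B[0,1]^T=2^{-1}S_\B T[\fp_1,D\fp_1]^T=TS_\A[\fp_1,D\fp_1]^T=2^{-1}T[\fp_1,D\fp_1]^T=2^{-1}[0,1]^T,
\]
so that $S_\B[0,1]^T=[0,1]^T$, i.e.\ $[0,1]^T\in\E_\B$ by the lemma characterizing $\E_\B$ as the eigenvectors of $S_\B$. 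Finally, $\di\E_\B=1$: every $w\in\E_\B$ satisfies $\B(-1)w=0$, so $\E_\B\subseteq\ker\B(-1)$, and a direct check of $\B(-1)$ via the symbol identity shows that its second column vanishes while its first does not, whence $\B(-1)$ has rank one and $\ker\B(-1)=\spa\{[0,1]^T\}$. Combining this with $[0,1]^T\in\E_\B$ yields $\E_\B=\spa\{[0,1]^T\}$, which is the remaining claim.
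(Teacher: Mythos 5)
Your proof is necessarily a different route from the paper's, since the paper does not prove \Cref{thm:Taylor} at all but quotes it from \cite{merrien12}; judged on its own terms, the first two thirds of your argument are correct. The reduction to the symbol identity $T(z)\A(z)=2^{-1}\B(z)T(z^2)$, the observation that the only possible poles of $\B(z)=2T(z)\A(z)T(z^2)^{-1}$ are simple poles at $z=\pm1$ sitting in the entries $M_{11}/(z^{-2}-1)$ and $M_{21}/(z^{-2}-1)$, and the cancellation of these poles from the $k=0$ spectral relations are all sound, as is the computation showing $S_{\B}[0,1]^T=[0,1]^T$. (One sentence should be added there: the identity $TS_{\A}=2^{-1}S_{\B}T$ is first obtained on finitely supported sequences, and extends to all of $\ell(\ZZ)^2$ because both sides are banded operators; this matters because the spectral sequence $[\fp_1,D\fp_1]^T$ is not finitely supported.)

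The genuine gap is the last step, the claim that ``a direct check of $\B(-1)$ via the symbol identity shows that its second column vanishes while its first does not,'' hence $\ker\B(-1)=\spa\{[0,1]^T\}$. The vanishing of the second column is fine (it is just $\B(-1)[0,1]^T=0$, which follows from $[0,1]^T\in\E_{\B}$), but the nonvanishing of the first column is both unobtainable by your method and false in general. First, evaluating $2T(z)\A(z)=\B(z)T(z^2)$ at $z=-1$ gives $2T(-1)\A(-1)=\B(-1)T(1)$, and since
\begin{equation*}
T(1)=\begin{pmatrix}0&-1\\0&1\end{pmatrix}
\end{equation*}
is singular, this relation only determines $\B(-1)[-1,1]^T$ and says nothing about the first column of $\B(-1)$; that column is only reachable as a limit $z\to-1$ of the rational formula and involves $\A'(-1)$, which the spectral condition does not control away from zero. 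Second, and decisively, consider the mask supported on $\{-1,0,1\}$ given by
\begin{equation*}
\A_{-1}=\begin{pmatrix}1/2&0\\0&0\end{pmatrix},\qquad
\A_{0}=\begin{pmatrix}1&0\\0&1/2\end{pmatrix},\qquad
\A_{1}=\begin{pmatrix}1/2&0\\0&1/2\end{pmatrix}.
\end{equation*}
One checks directly that it satisfies \Cref{def:spectral} with $d=1$ ($\fp_0=1$, $\fp_1(x)=x$), and that its unique Taylor factor is $\B(z)=(1+z)I_2$, with $I_2$ the identity matrix, i.e.\ $\B_0=\B_1=I_2$. Here $\B(-1)=0$, so $\ker\B(-1)=\RR^2$, and indeed $\E_{\B}=\RR^2$. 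Thus your rank-one claim is not just unproved but cannot hold under the stated hypotheses alone: the spectral condition of order $1$ yields $[0,1]^T\in\E_{\B}$, but the spanning statement $\E_{\B}=\spa\{[0,1]^T\}$ requires excluding degenerate masks such as the one above --- a nondegeneracy that the paper's formulation implicitly inherits from \cite{merrien12} and that no refinement of your symbol computation can supply.
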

If $\E_{\B}$ is spanned by $[0,1]^T$, the vector scheme associated with $S_{\B}$ has limit functions of the form $\Psi=[0,\psi_1]^T$ for all input data (this follows from results in \cite{micchelli98}; an explicit proof can also be found in \cite{moosmueller18}). Combining this with results from \cite{merrien12} we obtain
\begin{thm}\label{thm:convergent_Hermite}
Let $S_{\A}$ be an Hermite subdivision operator of spectral order $d$, $d\geq 1$, and let $S_{\B}$ be as in \Cref{thm:Taylor}. If the \emph{vector subdivision scheme} associated with $S_{\B}$ is $C^k$-convergent, $k\geq 0$, then the Hermite subdivision scheme associated with $S_{\A}$ is $C^{k+1}$-convergent. 
\end{thm}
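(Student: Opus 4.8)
The plan is to transport the convergence of the vector scheme through the Taylor factorization of \Cref{thm:Taylor} and then reconstruct the Hermite limit by a discrete integration. First I would put the Hermite recursion \eqref{eq:Hermite_sds} into stationary form: setting $\mathbf{g}^{[n]}:=\D^{n}\cb^{[n]}$ turns \eqref{eq:Hermite_sds} into $\mathbf{g}^{[n+1]}=S_{\A}\mathbf{g}^{[n]}$. Applying the Taylor operator $T$ and using $TS_{\A}=2^{-1}S_{\B}T$ from \Cref{thm:Taylor}, the sequence $\mathbf{d}^{[n]}:=T\mathbf{g}^{[n]}$ satisfies $\mathbf{d}^{[n+1]}=2^{-1}S_{\B}\mathbf{d}^{[n]}$, and hence by induction
\[ \mathbf{d}^{[n]}=2^{-n}S_{\B}^{\,n}\mathbf{d}^{[0]}=2^{-n}\mathbf{v}^{[n]}, \]
where $\mathbf{v}^{[n]}$ is the vector iterate \eqref{eq:vector_sd} started from $\mathbf{v}^{[0]}=T\cb^{[0]}$. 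Thus the scalar factor $2^{-n}$ is precisely what converts the vector iterate into the Taylor-transformed Hermite data.

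Second, I would invoke the hypothesis that $S_{\B}$ is $C^{k}$-convergent: there is $\Psi\in C^{k}(\RR,\RR^2)$ with $\sup_{j\in\ZZ\cap K}|\mathbf{v}^{[n]}_j-\Psi(2^{-n}j)|_\infty=:\varepsilon_n\to 0$ on every compact $K$. Since \Cref{thm:Taylor} gives $\E_{\B}=\spa\{[0,1]^T\}$, the results of \cite{micchelli98,moosmueller18} force the first component of $\Psi$ to vanish, so $\Psi=[0,\psi_1]^T$ with $\psi_1\in C^{k}$. Writing $\cb^{[n]}_j=[c^{[n]}_{1,j},c^{[n]}_{2,j}]^T$ and reading off the two rows of $\mathbf{d}^{[n]}_j=(T\D^n\cb^{[n]})_j=2^{-n}\mathbf{v}^{[n]}_j$, the second row yields $|c^{[n]}_{2,j}-\psi_1(2^{-n}j)|\le\varepsilon_n$, so the derivative part converges to $\psi_1$, while the first row yields the discrete slope relation
\[ c^{[n]}_{1,j+1}-c^{[n]}_{1,j}=2^{-n}c^{[n]}_{2,j}+\rho_{n,j}, \qquad |\rho_{n,j}|\le 2^{-n}\varepsilon_n. \]

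Third, I would reconstruct the value part by discrete integration. Telescoping the slope relation from an anchor index to $j$ with $2^{-n}j\to x$ turns $\sum_i 2^{-n}c^{[n]}_{2,i}$ into a Riemann sum converging to $\int_0^x\psi_1$, the accumulated errors being bounded by $|x|\,\varepsilon_n\to0$; this identifies the candidate limit $\varphi(x)=\varphi(0)+\int_0^x\psi_1(t)\,dt\in C^{k+1}$ with $\varphi'=\psi_1$. The main obstacle is making this rigorous: one must (i) show that the anchor value $c^{[n]}_{1,0}$ is Cauchy, and (ii) upgrade pointwise to uniform convergence on compacts. For (i) I would use that spectral order $\ge 1$ forces $S_{\A}$ to reproduce constants ($\fp_0\equiv1$, so $S_{\A}[1,0]^T=[1,0]^T$); subtracting the constant $[c^{[n]}_{1,0},0]^T$ and combining the finite support of $\A$ with the $O(2^{-n})$ smallness of $\mathbf{g}^{[n]}_k-[c^{[n]}_{1,0},0]^T$ near the origin gives $|c^{[n+1]}_{1,0}-c^{[n]}_{1,0}|=O(2^{-n})$, which is summable and hence forces convergence of $c^{[n]}_{1,0}$. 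Both (i) and (ii) are exactly the reconstruction step carried out in \cite{merrien12}, which I would cite rather than reprove.

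Finally, combining the two rows gives $\sup_{j\in\ZZ\cap K}|\cb^{[n]}_j-\Phi(2^{-n}j)|_\infty\to0$ with $\Phi=[\varphi,\varphi']^T$ and $\varphi\in C^{k+1}$, which is precisely $C^{k+1}$-convergence in the sense of \Cref{def:convergent_hermite}. Nontriviality transfers because $T$ is onto (the slope relation determines $\cb^{[0]}$ from any $\mathbf{v}^{[0]}$ up to a single constant), so a datum producing $\Psi\neq0$, that is $\psi_1\neq0$, yields $\varphi'\neq0$ and hence $\varphi\neq0$.
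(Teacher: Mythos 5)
Your proposal is correct and takes essentially the same route as the paper, which likewise obtains the theorem by combining the structure result $\Psi=[0,\psi_1]^T$ for the limits of the $S_{\B}$-scheme (a consequence of $\E_{\B}=\spa\{[0,1]^T\}$, cited from \cite{micchelli98,moosmueller18}) with the discrete-integration reconstruction of \cite{merrien12} --- exactly your second and third steps. The one blemish is your nontriviality argument: $T$ is \emph{not} onto as a map of bounded sequences (the discrete antiderivative of a bounded sequence need not be bounded), so you cannot pull back an arbitrary vector datum with $\Psi\neq 0$; but nontriviality is immediate anyway, since the spectral condition of order $\geq 1$ gives $S_{\A}[1,0]^T=[1,0]^T$, hence the constant initial data $\cb^{[0]}_j=[1,0]^T$ produce $\cb^{[n]}_j=[1,0]^T$ for all $n$ and the nonzero limit $\varphi\equiv 1$.
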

We mention that this theorem is also stated in \cite{conti14}. From \Cref{thm:convergent_Hermite} we see that a tool for checking $C^k$-convergence of a vector subdivision scheme is needed.
Combining \Cref{thm:convergent_Hermite} with \Cref{lem:iteration_vector}, we can state:
\begin{lem}\label{lem:iteration_hermite}
Let $d\geq 1$ and let $S_{\A}$ be a subdivision operator.
Suppose that for $n=0,\ldots,d$, there exist matrices $V^{[n]}$ and masks ${\C^{[n]}}$, such that $V^{[n]}$ is an $\E_{\C^{[n]}}$-generator and such that
\begin{align*}
&TS_{\A}=2^{-1}S_{\C^{[0]}}T\\
&\Delta_{V^{[n-1]}}\cdots \Delta_{V^{[0]}}TS_{\A}=2^{-(n+1)}S_{\C^{[n]}}\Delta_{V^{[n-1]}}\cdots \Delta_{V^{[0]}}T, \quad n=1,\ldots,d.
\end{align*}
If $2^{-1}S_{\C^{[d]}}$ is contractive, then the Hermite subdivision scheme associated with $S_{\A}$ is $C^{d}$-convergent.
\end{lem}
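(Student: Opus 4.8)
The plan is to read \Cref{lem:iteration_hermite} as the precise bridge between the vector-scheme iteration of \Cref{lem:iteration_vector} and the Hermite-to-vector transfer of \Cref{thm:convergent_Hermite}. Writing $\B=\C^{[0]}$, the first hypothesis $TS_{\A}=2^{-1}S_{\C^{[0]}}T$ is exactly a Taylor factorization in the sense of \Cref{thm:Taylor}, and the assumption that $V^{[0]}$ is an $\E_{\C^{[0]}}$-generator already forces $\di\E_{\C^{[0]}}=1$, since by definition such a generator contains a nonzero vector that spans $\E_{\C^{[0]}}$. The overall strategy is: (i) convert the tower of Hermite factorizations in the hypothesis into a tower of \emph{vector} factorizations of $S_{\C^{[0]}}$; (ii) feed that tower into \Cref{lem:iteration_vector} to obtain $C^{d-1}$-convergence of the vector scheme associated with $S_{\C^{[0]}}$; and (iii) apply \Cref{thm:convergent_Hermite} to upgrade this to $C^{d}$-convergence of the Hermite scheme of $S_{\A}$.

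For step (i), I would fix $n\in\{1,\dots,d\}$ and substitute $TS_{\A}=2^{-1}S_{\C^{[0]}}T$ into the $n$-th hypothesis. Its left-hand side becomes $2^{-1}\bigl(\Delta_{V^{[n-1]}}\cdots\Delta_{V^{[0]}}S_{\C^{[0]}}\bigr)T$, while its right-hand side is $2^{-(n+1)}S_{\C^{[n]}}\bigl(\Delta_{V^{[n-1]}}\cdots\Delta_{V^{[0]}}\bigr)T$, so after multiplying by $2$ both sides carry a common trailing factor $T$. Cancelling it and reindexing by $m=n-1$ produces
\[
\Delta_{V^{[m]}}\cdots\Delta_{V^{[0]}}S_{\C^{[0]}}=2^{-(m+1)}S_{\C^{[m+1]}}\Delta_{V^{[m]}}\cdots\Delta_{V^{[0]}},\qquad m=0,\dots,d-1,
\]
which is precisely \eqref{eq:iteration_vector} for $\B=\C^{[0]}$ with $k=d-1$. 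The generator hypotheses on the $V^{[n]}$ and the contractivity of $2^{-1}S_{\C^{[d]}}$ then match, term by term, the hypotheses of \Cref{lem:iteration_vector}, which yields $C^{d-1}$-convergence of the vector scheme of $S_{\C^{[0]}}$; applying \Cref{thm:convergent_Hermite} with $k=d-1$, the operator $S_{\C^{[0]}}$ playing the role of the Taylor-factorized operator, then finishes the argument.

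The one point that needs genuine justification rather than bookkeeping is the right-cancellation of $T$, i.e.\ passing from $PT=QT$ to $P=Q$ for the operators at hand; I expect this to be the main (though mild) obstacle. It is valid because $T$ is surjective on $\ell(\ZZ)^2$: for any $g=(g_1,g_2)^T$, solving $Tf=(\Delta f_1-f_2,\,f_2)^T=g$ reduces to setting $f_2=g_2$ and solving the scalar difference equation $\Delta f_1=g_1+g_2$, which always admits a solution on $\ZZ$. Hence every element of $\ell(\ZZ)^2$ lies in the range of $T$, so $PTf=QTf$ for all $f$ forces $Pg=Qg$ for all $g$, and thus $P=Q$. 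Once this is in place, the proof is the direct concatenation described above, and no further estimates are required.
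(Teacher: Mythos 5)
Your proposal is correct and follows exactly the route the paper intends: the paper states this lemma without a detailed proof, as the direct combination of \Cref{thm:convergent_Hermite} and \Cref{lem:iteration_vector}, which is precisely your steps (i)--(iii). Your explicit justification of the right-cancellation of $T$ via its surjectivity on $\ell(\ZZ)^2$ is a valid (and welcome) filling-in of a detail the paper leaves implicit.
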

In \Cref{sec:proof} we prove that the spectral condition of order $d$ stated by equation \eqref{eq:spectral} implies the existence of a factorization as in \Cref{lem:iteration_hermite}. The matrices $V^{[n]}$ are given by
\begin{equation*}
  V^{[0]}=\left[
	    \begin{array}{cc}
	     0 & 1 \\
	     1 & 0
	    \end{array}
	  \right], \quad
  V^{[n]}=\left[
	    \begin{array}{cc}
	     1 & 0 \\
	     G_n & 1
	    \end{array}
	  \right], \quad n \geq 1,
\end{equation*}
where $G_n$ are the Gregory coefficients, see the next section.

\section{A recursion involving iterated forward differences}\label{sec:forward_differences}
\subsection{Stirling numbers and Gregory coefficients}
%
We do not attempt to give an overview of properties and results concerning the Stirling numbers, as they are fundamental sequences in number theory, but we cite \cite{graham94} for an introduction.
We summarize a few properties relevant for this paper, which are all taken from \cite{graham94}.

The \emph{Stirling numbers of the first kind}, denoted by $\stiri{n}{m}$, count the numbers of ways to arrange $n$ elements into $m$ cycles. From the initial conditions
\begin{equation*}
\stiri{0}{0}=1, \quad \stiri{n}{0}=\stiri{0}{n}=0, \quad n\geq 1,
\end{equation*}
they can be computed via the following recurrence relation:
\begin{equation*}
\stiri{n+1}{m}=n\stiri{n}{m}+\stiri{n}{m-1}, \quad m\geq 1.
\end{equation*}

The \emph{Stirling numbers of the second kind}, denoted by $\stir{n}{m}$, count the number of ways to split a set of $n$ elements into $m$ non-empty subsets.
They can be computed using Binomial coefficients:
\begin{equation}\label{def:stirling2}
\stir{n}{m}=\frac{1}{m!}\sum_{j=0}^m {m \choose j} (-1)^{m-j}j^n.
\end{equation}
We further need the following properties:
\begin{equation}\label{prop:stir}
\stir{n}{n}=\stir{n}{1}=1,\quad \stir{n}{m}=0 \text{ if } m> n.
\end{equation}

We introduce the \emph{Gregory coefficients}
\begin{equation}\label{def:greg}
G_n=\frac{1}{n!}\sum_{j=0}^{n}\stiri{n}{j}\frac{(-1)^{n-j}}{j+1},
\end{equation}
which are also known as the \emph{Cauchy numbers of the first kind}, the \emph{Bernoulli numbers of the second kind} and the \emph{reciprocal logarithmic numbers}, see e.g.\ \cite{blagouchine17,kowalenko10,merlini06}.
The Gregory coefficients appear in many interesting contexts: As the coefficients in a power series expansion of the reciprocal logarithm \cite{kowalenko10}, in Gregory's method for numerical integration \cite{phillips72}, in various series representation involving Euler's constant \cite{blagouchine16,candelpergher12}, and in a series expansion of the Gompertz constant \cite{mezo14}, to name a few.

Our results in \Cref{sec:proof} are based on the following relation between the Gregory coefficients and the Stirling numbers of the second kind:
\begin{equation}\label{relation_stir_greg}
\sum_{j=0}^n \stir{n}{j}j!\,G_j=\frac{1}{n+1}.
\end{equation}
This is proved in e.g.\ \cite{merlini06}. Note that \cite{merlini06} shows \eqref{relation_stir_greg} for $\mathscr{C}_n=n!\, G_n$, and they call $\mathscr{C}_n$ the Cauchy numbers of the first kind.

\subsection{Iterated forward differences}
%
In this section we collect properties concerning the operators $\Delta$ and $D$, as well as the iterates $\Delta^{\ell}$, $\ell \geq 1$.
They can be derived easily from the respective definitions; for the convenience of the reader, we prove some of them.

The following lemma is clear from the definitions \eqref{def:delta} and \eqref{def:D}.
\begin{lem}
The differential operator $D$ and the forward difference operator $\Delta$ defined in \eqref{def:D} and \eqref{def:delta} commute:
 \begin{equation}
  \Delta D = D \Delta.
 \end{equation}
\end{lem}
\begin{lem}\label{lem:D_Delta_coeff}
For $k\geq 1$, both the differential operator $D$ and the forward difference operator $\Delta$ map $\Pi_k$ to $\Pi_{k-1}$. For $\pi \in \Pi_k$, the coefficients of $D\pi$ resp. $\Delta\pi$ are given by
\begin{align*}
&(D\pi)[j]=(j+1)\pi[j+1], \\
&(\Delta \pi)[j]=\sum_{m=j}^{k-1} {m+1 \choose j}\pi[m+1],
\end{align*}
$j=0,\ldots,k-1$. For $\pi \in \Pi_0, D\pi=\Delta \pi=0$.
\end{lem}
\begin{proof}
We prove the part involving $\Delta$, the rest is clear. For $\pi \in \Pi_k$, we obtain
\begin{align*}
 (\Delta \pi)(x)&=\pi(x+1)-\p(x)=\sum_{\ell=0}^{k}\p[\ell]\left((x+1)^{\ell}-x^{\ell}\right)\\
 &=\sum_{\ell=0}^{k}\p[\ell]\left(\sum_{m=0}^{\ell}{\ell \choose m}x^m-x^{\ell}\right)\\
 &=\sum_{\ell=1}^{k}\p[\ell]\left(\sum_{m=0}^{\ell-1}{\ell \choose m}x^m\right)
 =\sum_{\ell=0}^{k-1}\p[\ell+1]\left(\sum_{m=0}^{\ell}{\ell+1 \choose m}x^m\right)\\
 &=\sum_{m=0}^{k-1}\sum_{\ell=m}^{k-1}\p[\ell+1]{\ell+1 \choose m}x^m.
\end{align*}
This shows that $\Delta \pi \in \Pi_{k-1}$ and verifies the formula for the coefficients as stated in the Lemma.
\end{proof}
\begin{cor}\label{cor:Delta_minus_D}
For a polynomial $\pi\in \Pi_k$ with $k\geq 2$, the polynomial $(\Delta-D)\pi$ has degree $k-2$ and its coefficients are given by
\begin{equation*}
(\Delta-D)\pi[j]=\sum_{m=j+1}^{k-1}{m+1 \choose j}\pi[m+1], \quad j=0,\ldots,k-2.
\end{equation*}
For $\pi \in \Pi_0$ or $\pi\in\Pi_1$, $(\Delta-D)\pi=0$.
\end{cor}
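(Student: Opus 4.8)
The plan is to derive this directly from \Cref{lem:D_Delta_coeff} by subtracting the two coefficient formulas term by term. For $\pi \in \Pi_k$ with $k \geq 2$ and $j = 0,\ldots,k-1$, the lemma gives $(\Delta\pi)[j] = \sum_{m=j}^{k-1}\binom{m+1}{j}\pi[m+1]$ and $(D\pi)[j] = (j+1)\pi[j+1]$. First I would isolate the $m=j$ summand of the $\Delta$-sum, namely $\binom{j+1}{j}\pi[j+1]$, and observe that $\binom{j+1}{j} = j+1$, so this summand equals exactly $(D\pi)[j]$. Subtracting therefore cancels the lowest term of the sum and leaves $(\Delta-D)\pi[j] = \sum_{m=j+1}^{k-1}\binom{m+1}{j}\pi[m+1]$, which is the claimed formula.

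For the degree statement I would inspect the top two coefficients produced by this formula. At $j = k-1$ the sum $\sum_{m=k}^{k-1}$ is empty, so $(\Delta-D)\pi[k-1] = 0$; at $j = k-2$ the sum reduces to the single term $\binom{k}{k-2}\pi[k] = \binom{k}{2}\pi[k]$, which is nonzero precisely because a polynomial in $\Pi_k$ of exact degree $k$ has $\pi[k] \neq 0$ (as is the case for the spectral polynomials, where $\fp_k[k] = 1/k!$). Hence $(\Delta-D)\pi$ has degree exactly $k-2$.

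Finally, the cases $\pi \in \Pi_0$ and $\pi \in \Pi_1$ I would treat separately, since the general formula requires $k \geq 2$. For $\pi \in \Pi_0$ the last line of \Cref{lem:D_Delta_coeff} gives $D\pi = \Delta\pi = 0$; for $\pi \in \Pi_1$ both $D\pi$ and $\Delta\pi$ equal the constant $\pi[1]$, so their difference vanishes. I do not anticipate any genuine obstacle here: the only points requiring attention are the binomial identity $\binom{j+1}{j} = j+1$ that yields the clean cancellation, and the remark that the degree claim relies on $\pi[k]$ being nonzero.
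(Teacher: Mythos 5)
Your proposal is correct and follows exactly the route the paper intends: the corollary is stated as an immediate consequence of \Cref{lem:D_Delta_coeff}, obtained by subtracting the two coefficient formulas and cancelling the $m=j$ term via $\binom{j+1}{j}=j+1$, which is precisely your argument. Your added remarks (the empty sum at $j=k-1$, the low-degree cases, and the caveat that ``degree $k-2$'' tacitly assumes $\pi[k]\neq 0$) are all accurate and, if anything, slightly more careful than the paper's unproved statement.
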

\begin{lem}\label{lem:iterated_Delta}
For $\pi \in \Pi_k$, $k\geq1$ and $1\leq \ell\leq k$ we have
 \begin{equation*}
   \frac{1}{\ell!}\,\Delta^{\ell}\pi(x)=\sum_{j=0}^{k-\ell}\sum_{m=j}^{k}\pi[m]{m \choose j}
   \stir{m-j}{\ell}x^{j}.
 \end{equation*}
If $k=0$ or $\ell > k$, then $\Delta^{\ell}\pi=0$.
\end{lem}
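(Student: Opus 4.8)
The plan is to prove the formula
\[
\frac{1}{\ell!}\,\Delta^{\ell}\pi(x)=\sum_{j=0}^{k-\ell}\sum_{m=j}^{k}\pi[m]{m \choose j}\stir{m-j}{\ell}x^{j}
\]
by reducing everything to the single monomial $\pi(x)=x^m$ and then summing over $m$ with the coefficients $\pi[m]$, using linearity of $\Delta^{\ell}$. Thus the core of the argument is the monomial identity
\[
\frac{1}{\ell!}\,\Delta^{\ell}x^{m}=\sum_{j=0}^{m-\ell}{m \choose j}\stir{m-j}{\ell}x^{j},
\]
and once this is established, the general case follows by writing $\pi(x)=\sum_{m=0}^{k}\pi[m]x^m$, applying the monomial formula to each term, and reorganizing the double sum so that the outer index is the power $j$ of $x$.

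First I would recall the classical closed form for the iterated forward difference of a monomial at a general point. Since $\Delta^{\ell}f(x)=\sum_{i=0}^{\ell}(-1)^{\ell-i}\binom{\ell}{i}f(x+i)$, applying this to $f(x)=x^m$ and expanding $(x+i)^m$ via the binomial theorem gives
\[
\Delta^{\ell}x^{m}=\sum_{j=0}^{m}{m \choose j}x^{j}\sum_{i=0}^{\ell}(-1)^{\ell-i}{\ell \choose i}i^{m-j}.
\]
The key step is to recognize the inner sum: by the explicit formula \eqref{def:stirling2} for Stirling numbers of the second kind, namely $\stir{n}{\ell}=\frac{1}{\ell!}\sum_{i=0}^{\ell}\binom{\ell}{i}(-1)^{\ell-i}i^{n}$, the inner $i$-sum equals exactly $\ell!\,\stir{m-j}{\ell}$. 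Substituting this in and dividing by $\ell!$ yields the monomial identity above, where the summation over $j$ can be cut off at $m-\ell$ because $\stir{m-j}{\ell}=0$ whenever $m-j<\ell$, by \eqref{prop:stir}.

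I anticipate that the single genuine obstacle is correctly matching the inner alternating sum to \eqref{def:stirling2}: one must be careful that the index $n=m-j$ in $\stir{m-j}{\ell}$ is read off correctly from the exponent $i^{m-j}$, and that the vanishing property $\stir{m-j}{\ell}=0$ for $m-j<\ell$ is invoked to truncate the range of $j$ to $0\le j\le m-\ell$. Everything else is bookkeeping: after summing the monomial formula against $\pi[m]$, one exchanges the order of summation so that $j$ runs first, giving $\sum_{j=0}^{k-\ell}\sum_{m=j+\ell}^{k}$, and since the terms with $j+\ell>m$ vanish anyway one may harmlessly extend the inner sum down to $m=j$, recovering the stated double sum. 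The boundary cases---$\Delta^{\ell}\pi=0$ when $k=0$ or $\ell>k$---follow immediately, since for $\ell>m$ every Stirling number $\stir{m-j}{\ell}$ with $j\ge 0$ vanishes, so each monomial of degree $\le k<\ell$ is annihilated by $\Delta^{\ell}$.
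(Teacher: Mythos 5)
Your proposal is correct and follows essentially the same route as the paper's proof: both expand $\Delta^{\ell}$ as the alternating sum $\sum_{s}(-1)^{\ell-s}\binom{\ell}{s}\pi(x+s)$, apply the binomial theorem to $(x+s)^m$, identify the resulting inner alternating sum with $\ell!\,\stir{m-j}{\ell}$ via \eqref{def:stirling2}, and truncate the $j$-range using the vanishing property \eqref{prop:stir}. Working monomial-by-monomial and then summing by linearity, rather than carrying the full polynomial through as the paper does, is only a bookkeeping difference.
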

\begin{proof}
The cases $k=0$ and $\ell > k$ are clear. For $k\geq 1$ and $1\leq \ell\leq k$ we use the following well-known formula (see e.g.\ \cite[p.\ 188]{graham94}): 
\begin{equation*}
 (\Delta^{\ell}\pi)(x)=\sum_{s=0}^{\ell}{\ell \choose s}(-1)^{\ell-s}\p(x+s).
\end{equation*}
Then by \eqref{def:stirling2} we obtain
\begin{align*}
 (\Delta^{\ell}\pi)(x)&=\sum_{s=0}^{\ell}{\ell \choose s}(-1)^{\ell-s}\sum_{m=0}^k\pi[m](x+s)^m\\
 &=\sum_{s=0}^{\ell}{\ell \choose s}(-1)^{\ell-s}\sum_{m=0}^k\pi[m]\sum_{j=0}^{m}{m \choose j} x^{j}s^{m-j}\\
 &=\sum_{j=0}^k\sum_{m=j}^{k}\ell! \,\pi[m]{m \choose j} \stir{m-j}{\ell}x^{j}.
\end{align*}
Since the degree of $\Delta^{\ell}\pi$ is $k-\ell$, we obtain the result
\begin{equation*}
(\Delta^{\ell}\pi)(x)=\sum_{j=0}^{k-\ell}\sum_{m=j}^{k}\ell! \,\pi[m]{m \choose j} \stir{m-j}{\ell}x^{j}.
\end{equation*}
Note that the vanishing of $(\Delta^{\ell}\pi)[j]$
for $j=k-\ell+1,\ldots,k$ also follows from \eqref{prop:stir}: For $m=j,\ldots, k$, the value $m-j \in \{0,\ldots,\ell-1\}$, and therefore in these cases $\stir{m-j}{\ell}=0$.
\end{proof}
%

\subsection{Solving a recursion with iterated forward differences}
%
In this section we set the basis for the results in \Cref{sec:proof}. We solve the recursion of iteratively applying operators of the from \eqref{eq:delta_V} to polynomials (more generally, functions).
\begin{lem}\label{lem:recursion_pq}
Let $f^{[1]}_k: \mathbb{R} \rightarrow \mathbb{R}$ and $g^{[1]}_k: \mathbb{R} \rightarrow \mathbb{R}$ ($k=0,1,\dots$) be two sequences of real-valued functions and let $(a_{n}, n\geq 1)$, be a sequence of real numbers.
We define invertible matrices by
 \begin{equation*}
  V^{[n]}=\left[
	    \begin{array}{cc}
	     1 & 0 \\
	     a_{n} & 1
	    \end{array}
	  \right], \quad n\geq 1
\end{equation*}
and the sequences of real-valued functions $(f^{[n+1]}_k, k \geq 0)$ and $(g^{[n+1]}_k:k \geq 0)$ by
\begin{align}\label{al:def_f_g}
\left[ \begin{array}{c}
               f_k^{[n+1]}\\
               g_k^{[n+1]}
              \end{array}
	\right]=
\Delta_{V^{[n]}}
\left[ \begin{array}{c}
               f_{k+1}^{[n]}\\
               g_{k+1}^{[n]}
              \end{array}
	\right], \quad n \geq 1,\, k\geq 0.
\end{align}
Then for $n \geq 1$ and $k \geq 0$ we have
\begin{align}\label{al:it_f_g}
 f_k^{[n+1]}&=\Delta^{n}f_{k+n}^{[1]},\\ \nonumber
 g_k^{[n+1]}&=g_{k+n}^{[1]}-\sum_{\ell=1}^{n}a_{\ell}\Delta^{\ell-1}f_{k+n}^{[1]},
\end{align}
with the understanding that $\Delta^0=\operatorname{id}$.
\end{lem}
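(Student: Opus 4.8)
The plan is to first make the operator $\Delta_{V^{[n]}}$ completely explicit, and then to run a single induction on $n$ (keeping $k$ as a free parameter) driven by the two resulting scalar recursions. Since
\[
\left(V^{[n]}\right)^{-1}=\begin{bmatrix} 1 & 0 \\ -a_n & 1 \end{bmatrix},
\]
the definition \eqref{eq:delta_V} gives
\[
\Delta_{V^{[n]}}=\begin{bmatrix} \Delta & 0 \\ 0 & 1 \end{bmatrix}\begin{bmatrix} 1 & 0 \\ -a_n & 1 \end{bmatrix}=\begin{bmatrix} \Delta & 0 \\ -a_n & 1 \end{bmatrix}.
\]
Substituting this into \eqref{al:def_f_g} and reading off the two components yields, for all $n\geq 1$ and $k\geq 0$, the scalar recursions
\[
f_k^{[n+1]}=\Delta f_{k+1}^{[n]},\qquad g_k^{[n+1]}=g_{k+1}^{[n]}-a_n f_{k+1}^{[n]}.
\]
These two identities are the only input; everything else is bookkeeping.

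Next I would prove the formula for $f_k^{[n+1]}$ by induction on $n$, with $k$ arbitrary. The base case $n=1$ is immediate from the first recursion. For the inductive step I apply the recursion and then the hypothesis at index $k+1$:
\[
f_k^{[n+2]}=\Delta f_{k+1}^{[n+1]}=\Delta\bigl(\Delta^{n}f_{k+1+n}^{[1]}\bigr)=\Delta^{n+1}f_{k+(n+1)}^{[1]},
\]
which is exactly the claim with $n$ replaced by $n+1$. The only point to watch is the index shift: the recursion lowers the superscript by one while raising the subscript by one, so the closed form must be invoked at subscript $k+1$, and this is precisely what turns $k+1+n$ into $k+(n+1)$.

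Finally I would establish the formula for $g_k^{[n+1]}$ by induction on $n$, now using the already-proven $f$-formula. The base case again follows directly from the second recursion. For the step I write
\[
g_k^{[n+2]}=g_{k+1}^{[n+1]}-a_{n+1}f_{k+1}^{[n+1]},
\]
then substitute the inductive hypothesis for $g_{k+1}^{[n+1]}$ and the $f$-formula $f_{k+1}^{[n+1]}=\Delta^{n}f_{k+1+n}^{[1]}$, and observe that the freshly produced term $a_{n+1}\Delta^{n}f_{k+n+1}^{[1]}$ is exactly the $\ell=n+1$ summand of the target sum. Folding it in extends the range from $\ell=1,\dots,n$ to $\ell=1,\dots,n+1$ and gives the claimed expression.

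I do not anticipate a genuine obstacle: the content of the lemma is simply the unrolling of a linear recursion, and both the matrix computation and the two inductions are routine. The only place that demands care is the consistent tracking of the two opposing index shifts (superscript $n$ against subscript $k$), together with the small observation that the newly appearing $a_{n+1}$-term is the top summand, so that the two pieces merge cleanly into a single sum over $\ell=1,\dots,n+1$.
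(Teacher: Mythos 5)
Your proposal is correct and takes essentially the same approach as the paper: make $\Delta_{V^{[n]}}$ explicit as $\bigl[\begin{smallmatrix}\Delta & 0\\ -a_n & 1\end{smallmatrix}\bigr]$, read off the two scalar recursions, and unroll them by induction on $n$ with $k$ free, where the new $a_{n+1}$-term becomes the top summand of the sum. The only cosmetic difference is that you run two separate inductions (first for $f$, then for $g$ using the proven $f$-formula), whereas the paper verifies both components in a single induction; the computations are identical.
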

\begin{proof}
We prove this claim by induction on $n$.
Observe from \eqref{eq:delta_V} that
 \begin{equation*}
  \Delta_{ V^{[n]}}=\left[
	    \begin{array}{cc}
	     \Delta & 0 \\
	     -a_{n}& 1
	    \end{array}
	  \right].	  
 \end{equation*}
Starting with $n=1$, from \eqref{al:def_f_g} we obtain
\begin{align*}
 f_k^{[2]}&=\Delta f_{k+1}^{[1]},\\
 g_k^{[2]}&=g_{k+1}^{[1]}-a_{1}f_{k+1}^{[1]},
\end{align*}
which is exactly \eqref{al:it_f_g} for $n=1$.
Assume that the statement is true for $n$, we prove it for $n+1$:
\begin{align*}
  f_k^{[n+1]}&=\Delta f_{k+1}^{[n]} = \Delta \Delta^{n-1}f_{k+1+n-1}^{[1]}=\Delta^{n}f_{k+n}^{[1]},\\
  g_k^{[n+1]}&=g_{k+1}^{[n]}-a_{n}f_{k+1}^{[n]}\\
		&=g_{k+1+n-1}^{[1]}-\sum_{\ell=1}^{n-1}a_{\ell}\Delta^{\ell-1}f_{k+1+n-1}^{[1]}
		-a_{n}\Delta^{n-1}f_{k+1+n-1}^{[1]}\\
		  &=g_{k+n}^{[1]}-\sum_{\ell=1}^{n-1}a_{\ell}\Delta^{\ell-1}f_{k+n}^{[1]}
		-a_{n}\Delta^{n-1}f_{k+n}^{[1]}\\
	      &=g_{k+n}^{[1]}-\sum_{\ell=1}^{n}a_{\ell}\Delta^{\ell-1}f_{k+n}^{[1]},
\end{align*}
which concludes the induction step.
\end{proof}
With a suitable choice of $(f^{[1]}_k, k \geq 0)$ and $(g^{[1]}_k:k \geq 0)$ from \Cref{lem:recursion_pq} we obtain the following
\begin{cor}\label{cor:pi_sig}
 Let $(h_k:k\geq 0)$ be a sequence of differentiable functions. Setting $f^{[1]}_k=\Delta D h_{k+2}$ and $g^{[1]}_k=\Delta h_{k+2}-Dh_{k+2}$, $k\geq 0$, in \Cref{lem:recursion_pq}, then with $a_{0}=1$
 \begin{align}\label{def:p}
 f_k^{[n]} &=\Delta^{n}Dh_{k+n+1}, \\ \label{def:q}
 g_k^{[n]} &=\Delta h_{k+n+1}-\sum_{\ell=0}^{n-1}a_{\ell}
 \Delta^{\ell}Dh_{k+n+1},
\end{align}
for $n\geq 1,\, k\geq 0$,
and with the understanding that $\Delta^0=\operatorname{id}$.
\end{cor}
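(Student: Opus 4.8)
My plan is to treat this as essentially a direct specialization of \Cref{lem:recursion_pq}: I would simply substitute the prescribed initialization $f^{[1]}_k=\Delta D h_{k+2}$ and $g^{[1]}_k=\Delta h_{k+2}-Dh_{k+2}$ into the closed forms \eqref{al:it_f_g} and then reindex. The one bookkeeping point to keep in mind throughout is that \eqref{al:it_f_g} expresses the superscript-$(n{+}1)$ iterate in terms of $n\geq 1$, whereas \eqref{def:p}--\eqref{def:q} are stated for the superscript-$n$ iterate with $n\geq 1$; so the two indices differ by one, and I would split into the base level $n=1$ and the levels $n\geq 2$.

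For the base case $n=1$, there is nothing to iterate: $f^{[1]}_k=\Delta D h_{k+2}=\Delta^1 D h_{k+1+1}$ is exactly \eqref{def:p}, and $g^{[1]}_k=\Delta h_{k+2}-Dh_{k+2}=\Delta h_{k+2}-a_0\Delta^0 D h_{k+2}$ matches \eqref{def:q} precisely because $a_0=1$; this is where that convention first enters. For $n\geq 2$ I would apply \eqref{al:it_f_g} with its running index equal to $n-1\geq 1$. The $f$-part is immediate: $f_k^{[n]}=\Delta^{n-1}f^{[1]}_{k+n-1}=\Delta^{n-1}\Delta D h_{k+n+1}=\Delta^{n}D h_{k+n+1}$, using $f^{[1]}_{j}=\Delta D h_{j+2}$ at $j=k+n-1$ so that $h_{j+2}=h_{k+n+1}$. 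For the $g$-part, \eqref{al:it_f_g} gives $g_k^{[n]}=g^{[1]}_{k+n-1}-\sum_{\ell=1}^{n-1}a_\ell\,\Delta^{\ell-1}f^{[1]}_{k+n-1}$; inserting $g^{[1]}_{k+n-1}=\Delta h_{k+n+1}-Dh_{k+n+1}$ and $\Delta^{\ell-1}f^{[1]}_{k+n-1}=\Delta^{\ell-1}\Delta D h_{k+n+1}=\Delta^{\ell}Dh_{k+n+1}$ yields $\Delta h_{k+n+1}-Dh_{k+n+1}-\sum_{\ell=1}^{n-1}a_\ell\Delta^{\ell}Dh_{k+n+1}$. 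Finally I would absorb the stray $-Dh_{k+n+1}$ as the $\ell=0$ summand: since $a_0=1$ and $\Delta^0=\operatorname{id}$, we have $Dh_{k+n+1}=a_0\Delta^0 Dh_{k+n+1}$, so the difference collapses to $\Delta h_{k+n+1}-\sum_{\ell=0}^{n-1}a_\ell\Delta^{\ell}Dh_{k+n+1}$, which is \eqref{def:q}.

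Because everything reduces to substitution and reindexing, I do not expect a genuine obstacle; the only thing requiring care is the consistent index shift (the superscript offset by one, and the argument shift $h_{(k+n-1)+2}=h_{k+n+1}$) together with the dual role of the convention $a_0=1$, which simultaneously fixes the $n=1$ case and lets the isolated $Dh$ term be rewritten as the $\ell=0$ term of the sum. As an alternative I could bypass \Cref{lem:recursion_pq} and run the induction on $n$ directly from \eqref{al:def_f_g}, but invoking the lemma is cleaner and isolates exactly the arithmetic above.
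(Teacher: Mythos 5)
Your proposal is correct and is exactly the route the paper intends: the corollary is stated without proof as an immediate specialization of \Cref{lem:recursion_pq}, obtained by the substitution $f^{[1]}_k=\Delta D h_{k+2}$, $g^{[1]}_k=\Delta h_{k+2}-Dh_{k+2}$ and the index shift you carry out. Your explicit handling of the superscript offset (applying \eqref{al:it_f_g} at level $n-1$), the argument shift $h_{(k+n-1)+2}=h_{k+n+1}$, and the dual role of the convention $a_0=1$ (covering the base case and absorbing $-Dh_{k+n+1}$ as the $\ell=0$ summand) is precisely the bookkeeping the paper leaves to the reader.
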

We now consider the sequence \eqref{def:p} applied to polynomials.
\begin{prop}\label{prop:p}
Let $(\tau_k: k\geq 0)$ be a sequence of polynomials such that $\tau_k \in \Pi_k,\, k\geq 0$. For $n\geq 1$ and $k\geq 0$ define
 $\p_k^{[n]}=\Delta^{n}D\tau_{k+n+1}$. Then $\p_k^{[n]}\in \Pi_k$ and its coefficients are given by
 \begin{equation*}
  \frac{1}{n!}\, \p_k^{[n]}[j]=\sum_{m=j}^{k+n}(m+1)\tau_{k+n+1}[m+1]{m \choose j}
  \stir{m-j}{n}, \quad j=0,\ldots,k.
 \end{equation*}
\end{prop}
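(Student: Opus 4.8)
The plan is to compute $\p_k^{[n]} = \Delta^n D\tau_{k+n+1}$ directly by first applying the differential operator $D$ and then invoking the closed-form expression for iterated forward differences from \Cref{lem:iterated_Delta}. The two ingredients are already available: \Cref{lem:D_Delta_coeff} tells us the coefficients of $D\tau_{k+n+1}$, and \Cref{lem:iterated_Delta} tells us how $\Delta^n$ acts on an arbitrary polynomial in terms of Stirling numbers of the second kind. The entire proof is a matter of composing these two and bookkeeping the degrees.

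First I would set $\pi = D\tau_{k+n+1}$. Since $\tau_{k+n+1}\in\Pi_{k+n+1}$, \Cref{lem:D_Delta_coeff} gives $\pi\in\Pi_{k+n}$ with coefficients
\begin{equation*}
\pi[m]=(D\tau_{k+n+1})[m]=(m+1)\,\tau_{k+n+1}[m+1],\quad m=0,\ldots,k+n.
\end{equation*}
Next I would apply \Cref{lem:iterated_Delta} to $\pi\in\Pi_{k+n}$ with $\ell=n$ (noting $1\le n\le k+n$, so the lemma applies). This yields
\begin{equation*}
\frac{1}{n!}\,\Delta^n\pi(x)=\sum_{j=0}^{k}\sum_{m=j}^{k+n}\pi[m]{m\choose j}\stir{m-j}{n}x^{j},
\end{equation*}
where the upper limit on $j$ is $(k+n)-n=k$, exactly as claimed. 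Substituting the formula for $\pi[m]$ into this expression and reading off the coefficient of $x^j$ gives
\begin{equation*}
\frac{1}{n!}\,\p_k^{[n]}[j]=\sum_{m=j}^{k+n}(m+1)\,\tau_{k+n+1}[m+1]{m\choose j}\stir{m-j}{n},\quad j=0,\ldots,k,
\end{equation*}
which is precisely the stated identity, and simultaneously confirms $\p_k^{[n]}\in\Pi_k$.

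There is essentially no hard step here; the result is a clean composition of two earlier lemmas. The only point requiring a moment of care is verifying that the degree shift works out correctly: $\tau_{k+n+1}$ has degree $k+n+1$, applying $D$ drops this to $k+n$, and applying $\Delta^n$ drops it further to $k$, so the claim $\p_k^{[n]}\in\Pi_k$ is consistent, and the summation index $j$ runs exactly up to $k$. I would also want to check the boundary behavior when $k=0$ (so $\tau_{n+1}\in\Pi_{n+1}$, $D\tau_{n+1}\in\Pi_n$, and $\Delta^n$ of a degree-$n$ polynomial is a nonzero constant), to make sure the formula does not silently collapse; by \eqref{prop:stir} the surviving term is the one with $m-j=n$, i.e.\ $m=n$, $j=0$, giving $\frac{1}{n!}\p_0^{[n]}[0]=(n+1)\tau_{n+1}[n+1]$, which is the expected leading-coefficient contribution. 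This sanity check is optional for the writeup but reassures that the index ranges are correct.
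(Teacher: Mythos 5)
Your proposal is correct and matches the paper's own proof: the paper likewise obtains the degree statement and the coefficient formula by composing \Cref{lem:D_Delta_coeff} (coefficients of $D\tau_{k+n+1}$) with \Cref{lem:iterated_Delta} (Stirling-number formula for $\Delta^n$), exactly as you do. Your writeup is simply a more detailed version of the same two-lemma composition, with the degree bookkeeping and the $k=0$ sanity check made explicit.
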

\begin{proof}
\Cref{lem:D_Delta_coeff} implies that the degree of $\pi_k^{[n]}$ is $\de(\tau_{k+n+1})-n-1=k$. From \Cref{lem:D_Delta_coeff} and \Cref{lem:iterated_Delta} we obtain the coefficients as stated in the Proposition.
\end{proof}

Similarly, we now consider the sequence \eqref{def:q} applied to polynomials.
\begin{prop}\label{prop:q}
Let $(\tau_k: k\geq 0)$ be a sequence of polynomials such that $\tau_k \in \Pi_k,\, k\geq 0$. For $n\geq 1, \, k \geq 0$, define
 $\q_k^{[n]}=\Delta \tau_{k+n+1}-\sum_{\ell=0}^{n-1}a_{\ell}
 \Delta^{\ell}D\tau_{k+n+1}$, where $(a_n:n\geq 0)$ with $a_0=1$ is a real-valued sequence. Then $\q_k^{[n]}\in \Pi_{k+n}$ and its coefficients are given by
 \begin{align*}
  \q_k^{[n]}[j]
  =\sum_{m=j}^{k+n}{m+1 \choose j} \left(1
  -(m+1-j)\sum_{\ell=0}^{n-1} a_{\ell}\,\ell!\, \stir{m-j}{\ell}
  \right)\tau_{k+n+1}[m+1],
 \end{align*}
 $j=0,\ldots,k+n$.
\end{prop}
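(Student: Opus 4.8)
The plan is to compute the two constituent pieces of $\q_k^{[n]}$ separately and then fuse them by means of a single binomial identity. First I would dispose of the term $\Delta\tau_{k+n+1}$: since $\tau_{k+n+1}\in\Pi_{k+n+1}$, the $\Delta$-part of \Cref{lem:D_Delta_coeff} gives at once that $\Delta\tau_{k+n+1}\in\Pi_{k+n}$ with $(\Delta\tau_{k+n+1})[j]=\sum_{m=j}^{k+n}\binom{m+1}{j}\tau_{k+n+1}[m+1]$ for $j=0,\dots,k+n$. This already supplies the ``$1$'' inside the parenthesis of the claimed formula.

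For the subtracted sum I would proceed in two stages. Applying the $D$-part of \Cref{lem:D_Delta_coeff} yields $(D\tau_{k+n+1})[m]=(m+1)\tau_{k+n+1}[m+1]$, so that $D\tau_{k+n+1}\in\Pi_{k+n}$; feeding this into \Cref{lem:iterated_Delta} then gives, for each $\ell$, the expansion $\tfrac{1}{\ell!}(\Delta^{\ell}D\tau_{k+n+1})[j]=\sum_{m=j}^{k+n}(m+1)\tau_{k+n+1}[m+1]\binom{m}{j}\stir{m-j}{\ell}$. The crux of the whole argument is the elementary identity $(m+1)\binom{m}{j}=(m+1-j)\binom{m+1}{j}$, both sides equalling $(m+1)!/(j!\,(m-j)!)$. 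This identity lets me rewrite every summand of the subtracted part with the same factor $\binom{m+1}{j}\tau_{k+n+1}[m+1]$ that already appears in $\Delta\tau_{k+n+1}$, the remaining weight being exactly $(m+1-j)\,\ell!\,\stir{m-j}{\ell}$.

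Summing over $\ell=0,\dots,n-1$ against the coefficients $a_\ell$ and subtracting from $\Delta\tau_{k+n+1}$ then collapses the two double sums into one sum over $m$, producing precisely the stated coefficient $\q_k^{[n]}[j]=\sum_{m=j}^{k+n}\binom{m+1}{j}\bigl(1-(m+1-j)\sum_{\ell=0}^{n-1}a_\ell\,\ell!\,\stir{m-j}{\ell}\bigr)\tau_{k+n+1}[m+1]$. The degree bound $\q_k^{[n]}\in\Pi_{k+n}$ follows immediately, since each constituent polynomial has degree at most $k+n$, the bound being attained by the $\ell=0$ term $D\tau_{k+n+1}$.

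I do not expect a genuine obstacle here, as the proof is essentially bookkeeping; the one point that demands care is the reconciliation of the summation ranges. The outer index $m$ in the two pieces must be brought to the common upper limit $k+n$, and for each fixed $j$ the Stirling contributions with $m-j<\ell$ vanish automatically by \eqref{prop:stir}, which truncates the inner sum consistently so that the final combined coefficient is genuinely indexed by $j=0,\dots,k+n$.
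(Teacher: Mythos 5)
Your proposal is correct and takes essentially the same route as the paper's proof: both expand $\Delta\tau_{k+n+1}$ and $\Delta^{\ell}D\tau_{k+n+1}$ coefficient-wise via \Cref{lem:D_Delta_coeff} and \Cref{lem:iterated_Delta}, then merge the sums through the identity $(m+1)\binom{m}{j}=(m+1-j)\binom{m+1}{j}$, which the paper uses implicitly in its last display and you state explicitly. One harmless slip in a side remark: the degree $k+n$ is in fact \emph{not} attained, since $a_0=1$ makes the leading terms of $\Delta\tau_{k+n+1}$ and $D\tau_{k+n+1}$ cancel (cf.\ \Cref{cor:Delta_minus_D}), but the proposition only claims membership in $\Pi_{k+n}$, so your argument stands as written.
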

\begin{proof}
The degree of $\q_k^{[n]}$ is $k+n$, since by \Cref{lem:D_Delta_coeff} each application of the operators $D$ and $\Delta$ decrease the degree by $1$. Furthermore, from \Cref{lem:D_Delta_coeff} and \Cref{lem:iterated_Delta}, for $j=0,\ldots,k+n$ we obtain
 \begin{align*}
  \q_k^{[n]}[j]=&\sum_{m=j}^{k+n} {m+1 \choose j}\tau_{k+n+1}[m+1]\\
  &-\sum_{\ell=0}^{n-1}a_{\ell}\sum_{s=j}^{k+n}(s+1)\tau_{k+n+1}[s+1]{s \choose j}\ell! \, \stir{s-j}{\ell}\\
  =&\sum_{m=j}^{k+n} \left({r+1 \choose j}
  -(r+1){r \choose j}\sum_{\ell=0}^{n-1}a_{\ell} \,
  \ell! \, \stir{m-j}{\ell}\right)\tau_{k+n+1}[m+1]\\
  =&\sum_{m=j}^{k+n}{m+1 \choose j} \left(1
  -(m+1-j)\sum_{\ell=0}^{n-1}a_{\ell} \,
  \ell! \, \stir{m-j}{\ell}\right)\pi_{k+n+1}[m+1],
 \end{align*}
 which proves the claim.
\end{proof}
The following observation is the key result that makes our construction work (see the proof of \Cref{thm_main}): If in \Cref{prop:q} the elements of the sequence $(a_n:n\geq 0)$ are the Gregory coefficients, then the degree of the polynomial $\sigma^{[n]}_k$ is $k$:
\begin{prop}\label{prop:degree_q}
Let $(\tau_k: k\geq 0)$ be a sequence of polynomials such that $\tau_k \in \Pi_k,\, k\geq 0$. For $n\geq 1, \, k \geq 0$, define
 $\q_k^{[n]}=\Delta \tau_{k+n+1}-\sum_{\ell=0}^{n-1}G_{\ell}
 \Delta^{\ell}D\tau_{k+n+1}$, where $G_n$ are the Gregory coefficients \eqref{def:greg}. Then $\q_k^{[n]}\in \Pi_k$.
\end{prop}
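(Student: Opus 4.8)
The plan is to build directly on \Cref{prop:q}, which already establishes $\q_k^{[n]} \in \Pi_{k+n}$ and, taking $a_\ell = G_\ell$, supplies the explicit coefficient formula
\begin{equation*}
\q_k^{[n]}[j] = \sum_{m=j}^{k+n} \binom{m+1}{j}\left(1 - (m+1-j)\sum_{\ell=0}^{n-1} G_\ell\, \ell!\, \stir{m-j}{\ell}\right)\tau_{k+n+1}[m+1].
\end{equation*}
To sharpen the degree bound from $k+n$ down to $k$, it suffices to show that the top $n$ coefficients vanish, i.e.\ that $\q_k^{[n]}[j] = 0$ for every $j = k+1,\ldots,k+n$. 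Since the claim must hold for an arbitrary admissible sequence $(\tau_k)$, the scalars $\tau_{k+n+1}[m+1]$ are free, and as $\binom{m+1}{j} \neq 0$ it is enough to prove that the bracketed factor vanishes for each $m$ in the summation range.

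First I would fix $j \in \{k+1,\ldots,k+n\}$ and substitute $p = m-j$. As $m$ runs over $\{j,\ldots,k+n\}$ the index $p$ runs over $\{0,\ldots,k+n-j\}$, and because $j \geq k+1$ we have $k+n-j \leq n-1$, so $p \in \{0,\ldots,n-1\}$. With $m+1-j = p+1$, the bracketed factor becomes $1 - (p+1)\sum_{\ell=0}^{n-1} G_\ell\, \ell!\, \stir{p}{\ell}$. Here I would invoke property \eqref{prop:stir}, namely $\stir{p}{\ell} = 0$ whenever $\ell > p$: since $p \leq n-1$, truncating the sum at $\ell = n-1$ loses nothing, so
\begin{equation*}
\sum_{\ell=0}^{n-1} G_\ell\, \ell!\, \stir{p}{\ell} = \sum_{\ell=0}^{p} \stir{p}{\ell}\, \ell!\, G_\ell = \frac{1}{p+1},
\end{equation*}
where the last equality is exactly the Gregory--Stirling relation \eqref{relation_stir_greg}. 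Hence the bracket equals $1 - (p+1)\cdot\frac{1}{p+1} = 0$, so $\q_k^{[n]}[j] = 0$ for all $j > k$, and therefore $\q_k^{[n]} \in \Pi_k$.

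The step I expect to carry the whole argument is the truncation observation: the coefficient formula only involves $G_0,\ldots,G_{n-1}$, yet for the relevant range $p \leq n-1$ these are precisely the terms appearing in \eqref{relation_stir_greg}, because the Stirling numbers $\stir{p}{\ell}$ annihilate every $\ell > p$. There is no genuine obstacle beyond recognizing that the Gregory coefficients are engineered so that the bracket collapses via \eqref{relation_stir_greg}; everything else is bookkeeping, and the choice $a_\ell = G_\ell$ is exactly what produces this cancellation and thereby the degree drop.
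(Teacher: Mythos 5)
Your proof is correct and follows essentially the same route as the paper's own proof: both reduce the claim to showing $\q_k^{[n]}[j]=0$ for $j=k+1,\ldots,k+n$ using the coefficient formula of \Cref{prop:q} with $a_\ell=G_\ell$, truncate the inner sum via $\stir{m-j}{\ell}=0$ for $\ell>m-j$, and invoke the Gregory--Stirling relation \eqref{relation_stir_greg} to make the bracketed factor collapse to zero. The only cosmetic differences are your substitution $p=m-j$ and the remark about the coefficients $\tau_{k+n+1}[m+1]$ being free, which addresses necessity and is not needed for the (sufficient) term-by-term vanishing argument.
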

\begin{proof}
Note that $\q_k^{[n]}$ is the same sequence as in \Cref{prop:q} with $a_n=G_n$ and that $G_0=1$. Therefore we know that $\de(\q_k^{[n]})=n+k$.
In order to prove the statement of this Proposition, we have to show that
\begin{equation*}
\q_k^{[n]}[j]=0, \quad j=k+1,\ldots, k+n.
\end{equation*}
This can be deduced from the following observation: For $j\in \NN$ and $m=j,\ldots,k+n$, we have $m-j=0,\ldots, k+n-j$. Now if $j=k+1,\ldots,k+n$, then $m-j\in \{0,\ldots,n-1\}$. In particular, $m-j \leq n-1$. Therefore, in this case
\begin{equation}\label{eq:greg}
\sum_{\ell=0}^{n-1}G_{\ell}\,
  \ell! \, \stir{m-j}{\ell}
  =\sum_{\ell=0}^{m-j}G_{\ell}\,
  \ell! \, \stir{m-j}{\ell}
  =\frac{1}{m-j+1},
\end{equation}
using that $\stir{m-j}{\ell}=0$ for $\ell \geq m-j$ (see \eqref{prop:stir}) and the relation \eqref{relation_stir_greg} between the Stirling numbers of the second kind and the Gregory coefficients. Now from the form of $\q_k^{[n]}[j]$ in \Cref{prop:q}, we see that \eqref{eq:greg} implies the vanishing of $\q_k^{[n]}[j]$ for $j=k+1,\ldots,k+n$.
\end{proof}
%

\section{Statement and proof of the main results}\label{sec:proof}
%
The main results of this paper are formulated and proved in \Cref{prop:case0}, \Cref{thm_main} and \Cref{prop:greg_n}. They show that every Hermite subdivision operator of spectral order $d$ can be factorized as in \Cref{lem:iteration_hermite}, and that this factorization is with respect to the Gregory operators \eqref{def:gregory_op}. Furthermore, we give an explicit characterization of the eigenspaces in \Cref{lem:iteration_hermite}, and an easy-to-check criterion for $C^d$-convergence of an Hermite subdivision scheme of spectral order $d$ (\Cref{cor:greg_d}).
\begin{prop}\label{prop:case0}
Let $S_{\A}$ be an Hermite subdivision operator of spectral order $d$, $d\geq 1$. Denote by $\fp_k, k=0,\ldots,d$, its spectral polynomials (\Cref{def:spectral}).
Then there exists a subdivision operator $S_{\B^{[0]}}$ such that
\begin{align}\label{al:taylor}
    	TS_{\A}=2^{-1}S_{\B^{[0]}}T,  
\end{align}
with $\E_{\B^{[0]}}$ spanned by $[0,1]^T$.
Furthermore, for $k=0,\ldots,d-1$, the polynomials 
\begin{align*}
p_k^{[0]}&:=\Delta \fp_{k+1}-D\fp_{k+1}, \\
q_k^{[0]}&:=D\fp_{k+1},
\end{align*}
satisfy
\begin{equation}\label{al:V_factor}
    S_{\B^{[0]}}\left[  \begin{array}{cc}
                          p_k^{[0]}\\
                          q_k^{[0]}
                         \end{array}
            \right]
     = 2^{-k}
            \left[  \begin{array}{cc}
                          p_k^{[0]}\\
                          q_k^{[0]}
                         \end{array}
            \right]
\end{equation}
and $p_0^{[0]}=0,q_0^{[0]}=1$. If $d>1$, then $p_k^{[0]} \in \Pi_{k-1}$,
$q_k^{[0]}\in \Pi_k$ for $k=1,\ldots, d-1$.
\end{prop}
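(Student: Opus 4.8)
The plan is to obtain the operator $S_{\B^{[0]}}$ and the factorization \eqref{al:taylor} directly from \Cref{thm:Taylor}: since $S_{\A}$ has spectral order $d\geq 1$, that theorem furnishes a subdivision operator $S_{\B^{[0]}}$ with $TS_{\A}=2^{-1}S_{\B^{[0]}}T$ and $\E_{\B^{[0]}}$ spanned by $[0,1]^T$. Thus the existence statement and the description of $\E_{\B^{[0]}}$ require no further work; everything else is a matter of verifying the eigenrelation \eqref{al:V_factor} and computing degrees.

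For \eqref{al:V_factor}, I would evaluate the factorization identity on the sampled spectral eigenvector $[\fp_{k+1},D\fp_{k+1}]^T$ (legitimate since $S_{\A}$ and $T$ act on all of $\ell(\ZZ)^2$, and the sampled forward difference agrees with the polynomial one, as noted in \Cref{subsec:Hermite}). First I would record the elementary computation
\begin{equation*}
T\begin{bmatrix}\fp_{k+1}\\ D\fp_{k+1}\end{bmatrix}=\begin{bmatrix}\Delta\fp_{k+1}-D\fp_{k+1}\\ D\fp_{k+1}\end{bmatrix}=\begin{bmatrix}p_k^{[0]}\\ q_k^{[0]}\end{bmatrix},
\end{equation*}
which comes straight from the definition of $T$. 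Applying $TS_{\A}=2^{-1}S_{\B^{[0]}}T$ to this vector and using the spectral condition \eqref{eq:spectral} at level $k+1$ (available for $k=0,\ldots,d-1$), the left-hand side becomes $2^{-(k+1)}[p_k^{[0]},q_k^{[0]}]^T$ while the right-hand side is $2^{-1}S_{\B^{[0]}}[p_k^{[0]},q_k^{[0]}]^T$; cancelling the common factor $2^{-1}$ yields exactly \eqref{al:V_factor}. The conceptual point is that $T$ sends the spectral eigenvector at level $k+1$ to $[p_k^{[0]},q_k^{[0]}]^T$, so the Taylor factorization transports the spectral relation into an eigenrelation for $S_{\B^{[0]}}$ with eigenvalue shifted from $2^{-(k+1)}$ to $2^{-k}$.

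It then remains to pin down the base case and the degrees, which I would read off from the finite-difference lemmas. For $k=0$, since $\fp_1\in\Pi_1$ with $\fp_1[1]=1$ we get $D\fp_1=1$, hence $q_0^{[0]}=1$, and $(\Delta-D)\fp_1=0$ by \Cref{cor:Delta_minus_D}, hence $p_0^{[0]}=0$. For $1\leq k\leq d-1$ (so that $d>1$ and $k+1\geq 2$), \Cref{lem:D_Delta_coeff} gives $q_k^{[0]}=D\fp_{k+1}\in\Pi_k$, and \Cref{cor:Delta_minus_D} applied to $\fp_{k+1}\in\Pi_{k+1}$ shows $p_k^{[0]}=(\Delta-D)\fp_{k+1}$ has degree $k-1$, i.e.\ $p_k^{[0]}\in\Pi_{k-1}$.

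Since each ingredient is supplied by an earlier result, I expect no genuine obstacle: the proof is essentially bookkeeping once one spots the eigenvector identity above. The only point demanding a moment of care is the index alignment — the spectral relation is needed at level $k+1$, which is precisely why the range $k=0,\ldots,d-1$ appears, and why the degree drop in \Cref{cor:Delta_minus_D} must be by two (not one) for the claim $p_k^{[0]}\in\Pi_{k-1}$ to hold.
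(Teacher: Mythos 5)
Your proof is correct and follows essentially the same route as the paper's: existence of $S_{\B^{[0]}}$ and the eigenspace from \Cref{thm:Taylor}, the key identity $T[\fp_{k+1},D\fp_{k+1}]^T=[p_k^{[0]},q_k^{[0]}]^T$ combined with the spectral condition at level $k+1$ to get the eigenrelation, and \Cref{lem:D_Delta_coeff} with \Cref{cor:Delta_minus_D} for the base case and degrees. No gaps to report.
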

\begin{proof}
The existence of $S_{\B^{[0]}}$ as well as the form of the eigenspace follow from \cite{merrien12} (which is summarized in our \Cref{thm:Taylor}). We prove the part involving the polynomials $p_k^{[0]},q_k^{[0]}$. By definition
\begin{align*}
\left[ \begin{array}{c}
               p_k^{[0]}\\
               q_k^{[0]}
              \end{array}
	\right]=
T
\left[ \begin{array}{c}
               \fp_{k+1}\\
               D\fp_{k+1}
              \end{array}
	\right], \quad k=0,\ldots,d-1.
\end{align*}
Therefore
\begin{align*}
 S_{\B^{[0]}}\left[  \begin{array}{cc}
                      p_k^{[0]}\\
                      q_k^{[0]}
                     \end{array}
	    \right]
 &= S_{\B^{[0]}} T \left[ \begin{array}{c}
               \fp_{k+1}\\
               D\fp_{k+1}
              \end{array}
	\right]
= 2 T S_{\A}\left[ \begin{array}{c}
               \fp_{k+1}\\
               D\fp_{k+1}
              \end{array}
	\right]
= 2 \cdot 2^{-k-1}T\left[ \begin{array}{c}
               \fp_{k+1}\\
               D\fp_{k+1}
              \end{array}
	\right]	\\
&= 2^{-k} \left[ \begin{array}{c}
               p^{[0]}_k\\
               q^{[0]}_k
              \end{array}
	\right].	
\end{align*}
It is easy to see that $q^{[0]}_0=1$. The degree of $q^{[0]}_k$ is $k$ since $D$ decreases the degree of $\fp_{k+1}$ by $1$. \Cref{cor:Delta_minus_D} implies that $p_0^{[0]}=0$ and that the degree of $p_k^{[0]}$ is $k-1$, $k=1,\ldots, d-1$, if $d>1$.
\end{proof}
\begin{thm} \label{thm_main}
Let $d\geq 2$ and let $S_{\A}$ be an Hermite subdivision operator of spectral order $d$. Denote by $\fp_k, k=0,\ldots,d$, its spectral polynomials (\Cref{def:spectral}). Then we have the following:
\begin{enumerate}
\item
For $n=1,\ldots,d-1$ there exist subdivision operators $S_{\B^{[n]}}$  such that
\begin{align}\label{al:taylor2}
  \Delta_{V^{[n-1]}} \cdots \Delta_{V^{[0]}} T 			
  S_{\A}=2^{-(n+1)}S_{\B^{[n]}}\Delta_{V^{[n-1]}} \cdots 
  \Delta_{V^{[0]}} T,
\end{align} 
where
\begin{equation*}
  V^{[0]}=\left[
	    \begin{array}{cc}
	     0 & 1 \\
	     1 & 0
	    \end{array}
	  \right], \quad
  V^{[n]}=\left[
	    \begin{array}{cc}
	     1 & 0 \\
	     G_n & 1
	    \end{array}
	  \right], \quad n=1,\ldots,d-1,   
\end{equation*}
and $G_{n}$ are the Gregory coefficients of \eqref{def:greg}.
Furthermore, the eigenspaces $\E_{\B^{[n]}}$ are spanned by $[1,G_n]^T$.
\item
For $n=1,\ldots,d-1$ we define polynomials $p_k^{[n]},q_k^{[n]}$ by
 \begin{align*}
 p_k^{[n]} &:=\Delta^{n}D\fp_{k+n+1}, \quad k=0,\ldots,d-1-n,\\ 
 q_k^{[n]} &:=\Delta \fp_{k+n+1}-\sum_{\ell=0}^{n-1}G_{\ell}
 \Delta^{\ell}D\fp_{k+n+1}, \quad k=0,\ldots,d-1-n.
\end{align*}

They satisfy
        \begin{align}\label{al:poly_reprod}
         &S_{\B^{[n]}}\left[  \begin{array}{cc}
                              p_k^{[n]}\\
                              q_k^{[n]}
                             \end{array}
                \right]
         = 2^{-k}
                \left[  \begin{array}{cc}
                              p_k^{[n]}\\
                              q_k^{[n]}
                             \end{array}
                \right], \quad k=0,\ldots, d-1-n 
     \end{align}
and $p_k^{[n]},q_k^{[n]} \in \Pi_k$.
\end{enumerate}
\end{thm}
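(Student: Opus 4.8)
The plan is to prove both parts together by a single induction on $n$, running from the base case $n=0$---which is exactly \Cref{prop:case0}---up to $n=d-1$. At each level I regard the pair $[p_k^{[n]},q_k^{[n]}]^T$ as a column of polynomials and track it as a $2^{-k}$-eigenvector of $S_{\B^{[n]}}$. The inductive hypothesis at level $n$ (for $0\le n\le d-2$) has three parts: the factorization \eqref{al:taylor2} holds with the stated matrices $V^{[0]},\dots,V^{[n-1]}$; the eigenspace $\E_{\B^{[n]}}$ is spanned by $[1,G_n]^T$ (by $[0,1]^T$ when $n=0$); and the polynomials satisfy \eqref{al:poly_reprod} and lie in $\Pi_k$ for $k=0,\dots,d-1-n$. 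The closed forms $p_k^{[n]}=\Delta^{n}D\fp_{k+n+1}$ and $q_k^{[n]}=\Delta\fp_{k+n+1}-\sum_{\ell=0}^{n-1}G_\ell\Delta^{\ell}D\fp_{k+n+1}$ are exactly the ones produced by \Cref{cor:pi_sig} with $h_k=\fp_k$ and $a_\ell=G_\ell$.

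For the inductive step I would first extend the factorization. Because $\dim\E_{\B^{[n]}}=1$ and $V^{[n]}$ has first column spanning $\E_{\B^{[n]}}$ and second column independent of it, $V^{[n]}$ is an $\E_{\B^{[n]}}$-generator, so \Cref{thm:vector} provides a mask $\B^{[n+1]}$ with $\Delta_{V^{[n]}}S_{\B^{[n]}}=2^{-1}S_{\B^{[n+1]}}\Delta_{V^{[n]}}$. Left-multiplying the level-$n$ relation \eqref{al:taylor2} by $\Delta_{V^{[n]}}$ and inserting this intertwining yields the level-$(n+1)$ factorization with prefactor $2^{-(n+2)}$. For the eigenvectors I apply $\Delta_{V^{[n]}}$ to the level-$n$ eigenvector $[p_{k+1}^{[n]},q_{k+1}^{[n]}]^T$; the intertwining turns a $2^{-(k+1)}$-eigenvector of $S_{\B^{[n]}}$ into a $2^{-k}$-eigenvector of $S_{\B^{[n+1]}}$. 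It then remains to identify $\Delta_{V^{[n]}}[p_{k+1}^{[n]},q_{k+1}^{[n]}]^T$ with $[p_k^{[n+1]},q_k^{[n+1]}]^T$: for $n\ge1$ this is precisely the recursion of \Cref{lem:recursion_pq} with $a_\ell=G_\ell$, while the special first step (level $0$ to level $1$) through the swap matrix $V^{[0]}$ I would verify by hand, checking that $\Delta_{V^{[0]}}[p_{k+1}^{[0]},q_{k+1}^{[0]}]^T=[\Delta D\fp_{k+2},\,\Delta\fp_{k+2}-D\fp_{k+2}]^T$ matches the initialization $f^{[1]},g^{[1]}$ of \Cref{cor:pi_sig} (here $G_0=1$ is what makes the second entry $q_k^{[1]}$ come out correctly). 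The memberships $p_k^{[n+1]},q_k^{[n+1]}\in\Pi_k$ are then supplied by \Cref{prop:p} and \Cref{prop:degree_q}.

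To close the induction I must still show $\E_{\B^{[n+1]}}=\spa\{[1,G_{n+1}]^T\}$, since this is what makes $V^{[n+1]}$ a legitimate generator one level higher. I would do this by evaluating the eigenvectors at $k=0$. A leading-coefficient computation via \Cref{prop:p} gives $p_0^{[n+1]}=\Delta^{n+1}D\fp_{n+2}=1$, because $D\fp_{n+2}$ has degree $n+1$ and leading coefficient $1/(n+1)!$. The coefficient formula of \Cref{prop:q} together with the Stirling--Gregory identity \eqref{relation_stir_greg} gives $q_0^{[n+1]}=G_{n+1}$. Thus $[1,G_{n+1}]^T$ is an honest eigenvector for the eigenvalue $2^{0}=1$, hence lies in $\E_{\B^{[n+1]}}$; combined with the dichotomy $\dim\E_{\B^{[n+1]}}\in\{0,1\}$ from \Cref{thm:vector}, this forces $\E_{\B^{[n+1]}}=\spa\{[1,G_{n+1}]^T\}$, completing the step. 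The index budget fits exactly: $p_k^{[n]}$ requires $\fp_{k+n+1}$, and for $k\le d-1-n$ the largest index is $d$, so only the available spectral polynomials $\fp_0,\dots,\fp_d$ are used.

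The step I expect to be the main obstacle is the coupling of the two parts rather than any single calculation: invoking \Cref{thm:vector} at level $n$ requires $V^{[n]}$ to be an $\E_{\B^{[n]}}$-generator, which presupposes the identification $\E_{\B^{[n]}}=\spa\{[1,G_n]^T\}$, and that identification rests in turn on the $k=0$ polynomial computation---so the factorization (Part 1) and the polynomial eigenstructure (Part 2) genuinely have to be advanced in lockstep. The delicate point inside that computation is $q_0^{[n]}=G_n$: in the formula of \Cref{prop:q} every contribution from $m=0,\dots,n-1$ cancels exactly, because $\sum_{\ell=0}^{m}G_\ell\,\ell!\,\stir{m}{\ell}=1/(m+1)$ by \eqref{relation_stir_greg}, leaving only the top term $m=n$, whose bracket simplifies to $(n+1)!\,G_n$ and pairs with the leading coefficient $\fp_{n+1}[n+1]=1/(n+1)!$. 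This is precisely where the Gregory coefficients are forced on us, and it is what lets the one fixed choice of $V^{[n]}$ work uniformly for every Hermite operator of spectral order $d$.
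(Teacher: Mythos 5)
Your proposal is correct and follows essentially the same route as the paper's proof: induction on the level $n$ anchored at \Cref{prop:case0}, using \Cref{thm:vector} to extend the factorization one step (which requires the eigenspace identification from the previous level, exactly the lockstep coupling you point out), \Cref{lem:recursion_pq} together with \Cref{prop:p} and \Cref{prop:degree_q} to track the polynomial eigenvectors and their degrees, and the $k=0$ evaluations $p_0^{[n]}=1$, $q_0^{[n]}=G_n$ via \eqref{relation_stir_greg} plus the dimension dichotomy of \Cref{thm:vector} to pin down $\E_{\B^{[n]}}=\spa\{[1,G_n]^T\}$. The only difference is cosmetic re-indexing (your induction starts at $n=0$ and handles the swap-matrix step $V^{[0]}$ inside the induction, whereas the paper treats $n=1$ as the base case), and your cancellation analysis for $q_0^{[n]}$, including the surviving bracket $(n+1)!\,G_n$, matches the paper's computation exactly.
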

\begin{Rem}
Note that $p_k^{[n]},q_k^{[n]}$ are exactly the sequences of \Cref{prop:p} and \Cref{prop:q} using $\tau_k=\fp_k$ and $a_n=G_n$.
\end{Rem}
\begin{proof}
We fix $d\geq 2$ and prove this theorem by recursion on $n$, making use of \Cref{prop:case0}.

\textit{Case } $n=1$:
\Cref{prop:case0} implies that there exists $S_{\B^{[0]}}$ such that $TS_{\A}=2^{-1}S_{\B^{[0]}}T$ and $\E_{\B^{[0]}}$ is spanned by $[0,1]^T$. Therefore $V^{[0]}$ is an $\E_{\B^{[0]}}$-generator and by \Cref{thm:vector} there exists $S_{\B^{[1]}}$ such that 
\begin{equation*}
 \Delta_{V^{[0]}} S_{\B^{[0]}}=2^{-1}S_{\B^{[1]}}\Delta_{V^{[0]}}.
\end{equation*}
Therefore
\begin{equation*}
\Delta_{V^{[0]}} T 			
  S_{\A}=2^{-1}\Delta_{V^{[0]}}S_{\B^{[0]}}T
  =2^{-2}S_{\B^{[1]}}\Delta_{V^{[0]}}T,
 \end{equation*}
for $k=0,\ldots,d-2$, which proves \eqref{al:taylor2}.

With $p_{k}^{[0]},q_{k}^{[0]}$ defined in \Cref{prop:case0}, we have
\begin{equation}\label{eq:def_p1_q1}
 p_k^{[1]}=\Delta q_{k+1}^{[0]}, \quad q_k^{[1]}=p_{k+1}^{[0]}, \quad k=0,\ldots,d-2.
\end{equation}
That is,
\begin{align*}
\left[ \begin{array}{c}
               p_k^{[1]}\\
               q_k^{[1]}
              \end{array}
	\right]
=
\left[ \begin{array}{cc}
  0 & \Delta \\
  1 & 0
 \end{array}
 \right]
\left[ \begin{array}{c}
               p_{k+1}^{[0]}\\
               q_{k+1}^{[0]}
              \end{array}
	\right]
=
\Delta_{V^{[0]}}
\left[ \begin{array}{c}
               p_{k+1}^{[0]}\\
               q_{k+1}^{[0]}
              \end{array}
	\right].    
\end{align*}
Again using \Cref{prop:case0}, this implies
\begin{align}\label{al:eigen}
 S_{\B^{[1]}}\left[  \begin{array}{c}
                      p_k^{[1]}\\
                      q_k^{[1]}
                     \end{array}
	    \right]
& = S_{\B^{[1]}}\Delta_{V^{[0]}}  \left[ \begin{array}{c}
               p_{k+1}^{[0]}\\
               q_{k+1}^{[0]}
              \end{array}
	\right]
= 2\Delta_{V^{[0]}} S_{\B^{[0]}}\left[ \begin{array}{c}
               p_{k+1}^{[0]}\\
               q_{k+1}^{[0]}
              \end{array}
	\right]\\ \nonumber
&= 2^{-k}\Delta_{V^{[0]}}\left[ \begin{array}{c}
               p^{[0]}_{k+1}\\
               q^{[0]}_{k+1}
              \end{array}
	\right]
= 2^{-k}\left[  \begin{array}{c}
                      p_k^{[1]}\\
                      q_k^{[1]}
                     \end{array}
	    \right],
 \end{align}
 proving \eqref{al:poly_reprod}.

From \Cref{prop:case0} we know that $q_{k+1}^{[0]} \in \Pi_{k+1}$ and $p_{k+1}^{[0]}\in \Pi_k$ and thus \eqref{eq:def_p1_q1} implies that $p_{k}^{[1]},q_{k}^{[1]} \in \Pi_k$, $k=0,\ldots,d-2$.

In particular $p^{[1]}_0,q^{[1]}_0$ are constants.
From the computation \eqref{al:eigen} we see that $[p^{[1]}_0,q^{[1]}_0]^T$ lies the eigenspace of $S_{\B^{[1]}}$ with respect to the eigenvalue $1$. Using the explicit form of $p^{[1]}_0,q^{[1]}_0$ from\Cref{prop:p} and \Cref{prop:q} we compute
 \begin{align*}
  &p_0^{[1]}=p_0^{[1]}[0]=1,\\
  &q_0^{[1]}=q_0^{[1]}[0]=1/2=G_1.
 \end{align*}
Hence the eigenspace $\E_{\B^{[1]}}\neq \{0\}$ and by \Cref{thm:vector} it has dimension $1$. Therefore it is spanned by $[p^{[1]}_0,q^{[1]}_0]^T=[1,G_1]^T$.
This concludes the proof for $n=1$.

If $d=2$ nothing else needs to be shown, since $n=1$ only. We now prove that if $d>2$ the case $n-1$ implies the case $n$, for $n=2,\ldots,d-1$.

We assume that the statements of the theorem are satisfied for $n-1$ and prove it for $n$.

By assumption there exists a subdivision operator $S_{\B^{[n-1]}}$ such that 
\begin{align*}
  \Delta_{V^{[n-2]}} \cdots \Delta_{V^{[0]}} T 			
  S_{\A}=2^{-n}S_{\B^{[n-1]}}\Delta_{V^{[n-2]}} \cdots 
  \Delta_{V^{[0]}} T
\end{align*}
and the eigenspace $\E_{\B^{[n-1]}}$ is spanned by $[1,G_{n-1}]^T$. Therefore $V^{[n-1]}$ is an $\E_{\B^{[n-1]}}$-generator and by \Cref{thm:vector} there exists a subdivision operator $S_{\B^{[n]}}$ such that
\begin{equation*}
 \Delta_{V^{[n-1]}} S_{\B^{[n-1]}}=2^{-1}S_{\B^{[n]}}\Delta_{V^{[n-1]}}.
\end{equation*}
This implies
\begin{align*}
  \Delta_{V^{[n-1]}} \cdots \Delta_{V^{[0]}} T 			
  S_{\A}
  &=2^{-n}\Delta_{V^{[n-1]}}S_{\B^{[n-1]}}\Delta_{V^{[n-2]}} \cdots 
  \Delta_{V^{[0]}} T\\
  &=2^{-(n+1)}S_{\B^{[n]}}\Delta_{V^{[n-1]}}\Delta_{V^{[n-2]}} \cdots 
  \Delta_{V^{[0]}} T,
\end{align*}
which proves \eqref{al:taylor2}.

Since $p_k^{[n]},q_k^{[n]}$ are the sequences of \Cref{prop:p} and \Cref{prop:q} with $\tau_k=\fp_k$ and $a_n=G_n$, we know from \Cref{lem:recursion_pq} that
\begin{align*}
\left[ \begin{array}{c}
               p_k^{[n]}\\
               q_k^{[n]}
              \end{array}
	\right]=
\Delta_{V^{[n-1]}}
\left[ \begin{array}{c}
               p_{k+1}^{[n-1]}\\
               q_{k+1}^{[n-1]}
              \end{array}
	\right], \quad k=0,\ldots, d-1-n.
\end{align*}
Therefore
\begin{align}\label{al:n}
 S_{\B^{[n]}}\left[  \begin{array}{c}
                      p_k^{[n]}\\
                      q_k^{[n]}
                     \end{array}
	    \right]
& = S_{\B^{[n]}}\Delta_{V^{[n-1]}}  \left[ \begin{array}{c}
               p_{k+1}^{[n-1]}\\
               q_{k+1}^{[n-1]}
              \end{array}
	\right]
= 2\Delta_{V^{[n-1]}} S_{\B^{[n-1]}}\left[ \begin{array}{c}
               p_{k+1}^{[n-1]}\\
               q_{k+1}^{[n-1]}
              \end{array}
	\right]\\ \nonumber
&= 2^{-k}\Delta_{V^{[n-1]}}\left[ \begin{array}{c}
               p^{[n-1]}_{k+1}\\
               q^{[n-1]}_{k+1}
              \end{array}
	\right]
= 2^{-k}\left[  \begin{array}{c}
                      p_k^{[n]}\\
                      q_k^{[n]}
                     \end{array}
	    \right],
 \end{align}
 which proves \eqref{al:poly_reprod}.
 
Since $\fp_k\in \Pi_k$, from \Cref{prop:p} and \Cref{prop:degree_q} we can conclude that $p^{[n]}_k,q^{[n]}_k\in \Pi_k$. 

In particular, $p^{[n]}_0,q^{[n]}_0$ are constants and from the computation \eqref{al:n} we see that $[p^{[n]}_0,q^{[n]}_0]^T$ lies in the eigenspace of $S_{\B^{[n]}}$ with respect to the eigenvalue $1$.
Using the explicit formula of \Cref{prop:p} we get
\begin{align*}
 p^{[n]}_0
 &=p^{[n]}_0[0]
=n!\, \sum_{m=0}^{n}(m+1)\fp_{n+1}[m+1]{m \choose 0}\stir{m}{n}\\
&=n!\, (n+1)\fp_{n+1}[n+1]\stir{n}{n}=\frac{(n+1)!}{(n+1)!}\\
&=1,
\end{align*}
where we use the properties of the Stirling numbers of the second kind \eqref{prop:stir} and the fact that $\fp_{\ell}[\ell]=1 / \ell !$ from \Cref{def:spectral}.

Continuing with the explicit formula for $q^{[n]}_0$ from \Cref{prop:q} we also get
\begin{align*}
 q^{[n]}_0=&\, q^{[n]}_0[0]\\
 =& \sum_{m=0}^{n}{m+1 \choose 0} \left(1
  -(m+1)\sum_{\ell=0}^{n-1} G_{\ell}\ell!\, \stir{m}{\ell}
  \right)\fp_{n+1}[m+1]\\
 =& \sum_{m=0}^{n} \left(1
  -(m+1)\sum_{\ell=0}^{\min\{m,n-1\}} G_{\ell}\ell!\, \stir{m}{\ell}
  \right)\fp_{n+1}[m+1]\\  
=& \sum_{m=0}^{n-1}\left(1
  -(m+1)\sum_{\ell=0}^{m} G_{\ell}\ell!\, \stir{m}{\ell}
  \right)\fp_{n+1}[m+1]\\
  &+
  \left(1
  -(n+1)\sum_{\ell=0}^{n-1} G_{\ell}\ell!\, \stir{n}{\ell}
  \right)\fp_{n+1}[n+1]\\  
=& \sum_{m=0}^{n-1}\left(1
  -(m+1)\frac{1}{m+1}\right)\fp_{n+1}[m+1]\\
  &+
  \left(1
  -(n+1)\left(\frac{1}{n+1}-G_n n! \stir{n}{n}\right)
  \right)\fp_{n+1}[n+1]\\  
=& \,(n+1)\,G_n\, n! \, \fp_{n+1}[n+1]\\
=& \,G_n,
\end{align*}
where again we use the properties of the Stirling numbers of the second kind \eqref{prop:stir}, the relation \eqref{relation_stir_greg}, and the fact that $\fp_{\ell}[\ell]=1 / \ell !$ from \Cref{def:spectral}.

The eigenspace $\E_{\B^{[n]}}\neq \{0\}$ and thus, by \Cref{thm:vector}, it has dimension $1$. It is therefore spanned by $[p_0^{[n]},q_0^{[n]}]^T=[1,G_n]^T$. This concludes the proof.
\end{proof}
\begin{cor}
With notation as in \Cref{thm_main}, the operator used for factorizing is the $n$-th Gregory operator \eqref{def:gregory_op}, that is
\begin{equation*}
\Delta_{V^{[n-1]}}\cdots \Delta_{V^{[0]}}T
=\G^{[n]}=\left[\begin{array}{cc}
		0 & \Delta^n \\
        \Delta & -\sum_{\ell=0}^{n-1}G_{\ell}\Delta^{\ell}
		\end{array}\right], \quad n=1,\ldots,d-1.
\end{equation*}
\end{cor}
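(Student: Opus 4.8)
The plan is to prove this operator identity by a short induction on $n$, working directly with the explicit $(2\times 2)$ operator-matrix forms of $T$ and of each $\Delta_{V^{[m]}}$. The key structural observation that makes everything routine is that every entry occurring in these matrices is a polynomial (with scalar coefficients) in the single operator $\Delta$; since $\Delta$ commutes with itself and with scalar multiplication, the $(2\times 2)$ blocks multiply exactly like matrices over a commutative ring, governed by the rule $\Delta^a\Delta^b=\Delta^{a+b}$. Hence no noncommutativity subtleties arise and the whole argument reduces to bookkeeping of the Gregory coefficients.

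First I would record the explicit forms of the operators. From \eqref{eq:delta_V}, using $(V^{[0]})^{-1}=V^{[0]}$, one obtains $\Delta_{V^{[0]}}=\left[\begin{smallmatrix} 0 & \Delta \\ 1 & 0 \end{smallmatrix}\right]$, while the computation already carried out in the proof of \Cref{lem:recursion_pq} (with $a_n=G_n$) gives $\Delta_{V^{[n]}}=\left[\begin{smallmatrix} \Delta & 0 \\ -G_n & 1 \end{smallmatrix}\right]$ for $n\geq 1$. For the base case $n=1$ I would then simply multiply:
\[
\Delta_{V^{[0]}}T
=\begin{bmatrix} 0 & \Delta \\ 1 & 0 \end{bmatrix}
\begin{bmatrix} \Delta & -1 \\ 0 & 1 \end{bmatrix}
=\begin{bmatrix} 0 & \Delta \\ \Delta & -1 \end{bmatrix}.
\]
Since $G_0=1$, the right-hand side is exactly $\G^{[1]}$, which establishes the claim for $n=1$.

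For the inductive step I would assume $\Delta_{V^{[n-1]}}\cdots\Delta_{V^{[0]}}T=\G^{[n]}$ and left-multiply by $\Delta_{V^{[n]}}$, computing
\[
\Delta_{V^{[n]}}\,\G^{[n]}
=\begin{bmatrix} \Delta & 0 \\ -G_n & 1 \end{bmatrix}
\begin{bmatrix} 0 & \Delta^{n} \\ \Delta & -\sum_{\ell=0}^{n-1}G_\ell\Delta^\ell \end{bmatrix}.
\]
Carrying out the entrywise multiplication, the top row becomes $[\,0,\ \Delta^{n+1}\,]$ via $\Delta\cdot\Delta^{n}=\Delta^{n+1}$, the bottom-left entry is $\Delta$, and the bottom-right entry telescopes as $-G_n\Delta^{n}-\sum_{\ell=0}^{n-1}G_\ell\Delta^\ell=-\sum_{\ell=0}^{n}G_\ell\Delta^\ell$. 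This is precisely $\G^{[n+1]}$, so the induction closes and the identity holds for all $n=1,\dots,d-1$.

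I do not expect a genuine obstacle here: the statement is an algebraic identity of operator matrices, and the Gregory coefficients enter only through the single off-diagonal entry $-G_n$ of $\Delta_{V^{[n]}}$, which accumulates cleanly into the lower-right sum. The one point that deserves a sentence of justification is the legitimacy of treating the matrix entries as ordinary polynomials in $\Delta$; as noted above this is immediate because $\Delta$ generates a commutative operator algebra, so the block products are valid. Everything else is the elementary matrix multiplication displayed above.
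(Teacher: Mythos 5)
Your proof is correct and follows essentially the same route as the paper: the paper obtains this identity from the very same induction, carried out in \Cref{lem:recursion_pq} (with $a_\ell=G_\ell$), where the products of the operators $\Delta_{V^{[n]}}=\left[\begin{smallmatrix}\Delta & 0\\ -G_n & 1\end{smallmatrix}\right]$ are computed by exactly the entrywise matrix multiplications you perform. The only cosmetic difference is that you fold $\Delta_{V^{[0]}}T=\left[\begin{smallmatrix}0 & \Delta\\ \Delta & -1\end{smallmatrix}\right]$ into the induction from the start, whereas the paper runs the recursion on $\Delta_{V^{[n-1]}}\cdots\Delta_{V^{[1]}}$ and composes with $\Delta_{V^{[0]}}T$ afterwards.
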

From \Cref{prop:case0} and \Cref{thm_main} we get one additional factorization:

\begin{prop}\label{prop:greg_n}
Let $d\geq 1$ and let $S_{\A}$ be an Hermite subdivision operator of spectral order $d$. Then for $n=1,\ldots, d$, the operator $S_{\A}$ factorizes with respect to the $n$-th Gregory operator $\G^{[n]}$ \eqref{def:gregory_op}, that is, there exist subdivision operators $S_{\B^{[n]}}$ such that
\begin{equation*}
\G^{[n]}S_{\A}=2^{-(n+1)}S_{\B^{[n]}}\G^{[n]}, \quad n=1,\ldots, d.
\end{equation*}
\end{prop}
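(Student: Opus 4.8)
The plan is to read the stated factorizations directly off the chain of operator identities already assembled in \Cref{prop:case0} and \Cref{thm_main}, closing the gap at the top level $n=d$ by one further application of \Cref{thm:vector}. The starting point is the observation that the identity $\Delta_{V^{[n-1]}}\cdots\Delta_{V^{[0]}}T=\G^{[n]}$ is purely operator-algebraic: a direct block computation gives $\Delta_{V^{[0]}}T=\G^{[1]}$ and $\Delta_{V^{[n]}}\G^{[n]}=\G^{[n+1]}$ for every $n\geq 1$, so by induction the identity holds for all $n\geq 1$. Since $V^{[0]},\dots,V^{[d-1]}$ are defined irrespective of spectral order, it holds in particular for $n=d$, beyond the range stated in the corollary preceding this proposition.

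With this in hand, for $d\geq 2$ and $n=1,\dots,d-1$ the result is immediate: substituting $\Delta_{V^{[n-1]}}\cdots\Delta_{V^{[0]}}T=\G^{[n]}$ into the factorization \eqref{al:taylor2} of \Cref{thm_main} yields
\begin{equation*}
\G^{[n]}S_{\A}=2^{-(n+1)}S_{\B^{[n]}}\G^{[n]}.
\end{equation*}
The only remaining case is the top level $n=d$, which \Cref{thm_main} does not reach. Here I would use that \Cref{thm_main} delivers not merely the operator $S_{\B^{[d-1]}}$ but the full eigenspace information $\E_{\B^{[d-1]}}=\spa\{[1,G_{d-1}]^T\}$; hence $\di\E_{\B^{[d-1]}}=1$ and $V^{[d-1]}$ is a legitimate $\E_{\B^{[d-1]}}$-generator, its first column spanning $\E_{\B^{[d-1]}}$ and its second being linearly independent. \Cref{thm:vector} then supplies a subdivision operator $S_{\B^{[d]}}$ with $\Delta_{V^{[d-1]}}S_{\B^{[d-1]}}=2^{-1}S_{\B^{[d]}}\Delta_{V^{[d-1]}}$. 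Left-multiplying the $n=d-1$ instance of \eqref{al:taylor2} by $\Delta_{V^{[d-1]}}$ and inserting this relation gives $\G^{[d]}S_{\A}=2^{-(d+1)}S_{\B^{[d]}}\G^{[d]}$, completing the case.

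The case $d=1$ falls to the same one-step argument, starting from \Cref{prop:case0} rather than \Cref{thm_main}: there $TS_{\A}=2^{-1}S_{\B^{[0]}}T$ with $\E_{\B^{[0]}}=\spa\{[0,1]^T\}$, so $V^{[0]}$ generates $\E_{\B^{[0]}}$ and \Cref{thm:vector} produces $S_{\B^{[1]}}$ with $\G^{[1]}S_{\A}=\Delta_{V^{[0]}}TS_{\A}=2^{-2}S_{\B^{[1]}}\G^{[1]}$. I expect the main obstacle to be not computational but structural: \Cref{thm:vector} applies to $S_{\B^{[d-1]}}$ only because \Cref{thm_main} certifies that $\E_{\B^{[d-1]}}$ is exactly one-dimensional with an explicitly known spanning vector matching $V^{[d-1]}$; were this eigenspace larger, or its generator not available in closed form, the top-level factorization would not be available. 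Everything else is substitution of the operator identity $\Delta_{V^{[n-1]}}\cdots\Delta_{V^{[0]}}T=\G^{[n]}$ together with re-indexing.
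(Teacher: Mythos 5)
Your proposal is correct and follows essentially the same route as the paper: quote \Cref{thm_main} (via the identity $\Delta_{V^{[n-1]}}\cdots\Delta_{V^{[0]}}T=\G^{[n]}$) for $n=1,\ldots,d-1$, close the top level $n=d$ by applying \Cref{thm:vector} with the generator $V^{[d-1]}$ certified by the eigenspace statement $\E_{\B^{[d-1]}}=\spa\{[1,G_{d-1}]^T\}$, and treat $d=1$ separately from \Cref{prop:case0}. Your explicit induction showing $\Delta_{V^{[n]}}\G^{[n]}=\G^{[n+1]}$ is a nice touch the paper leaves implicit, but it does not change the argument.
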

\begin{proof} 
For $d=1$, from \Cref{prop:case0}, we get a factorization
\begin{equation*}
TS_{\A}=2^{-1}S_{\B^{[0]}}T,
\end{equation*}
and we know that $\E_{\B^{[0]}}$ is spanned by $[0,1]^T$. Therefore $V^{[0]}$ defined in \Cref{thm_main} is an $\E_{\B^{[0]}}$-generator and by \Cref{thm:vector} there exists $S_{\B^{[1]}}$ such that
\begin{equation*}
\Delta_{V^{[0]}}TS_{\A}=2^{-2}S_{\B^{[1]}}\Delta_{V^{[0]}}T,
\end{equation*}
and $\Delta_{V^{[0]}}T=\G^{[1]}$. This concludes the case $d=1$.

Now if $d\geq 2$, from \Cref{thm_main} we obtain $S_{\B^{[n]}}$ such that
\begin{equation*}
\G^{[n]}S_{\A}=2^{-(n+1)}S_{\B^{[n]}}\G^{[n]}, \quad n=1,\ldots,d-1.
\end{equation*}
We know that $\E_{\B^{[d-1]}}$ is spanned by $[1,G_{d-1}]^T$. Therefore, by \Cref{thm:vector}, we can factorize with respect to 
\begin{equation*}
V^{[d-1]}= \left[ \begin{array}{cc}
			1 & 0 \\
            G_{d-1} & 1
			\end{array}
          \right]
\end{equation*}
and obtain that
there exists a subdivision operator $S_{\B^{[d]}}$ such that
\begin{equation*}
\G^{[d]}S_{\A}=2^{-(d+1)}S_{\B^{[d]}}\G^{[d]}.
\end{equation*}
This concludes the proof for $d\geq 2$.
\end{proof}
\begin{Rem}
Note that with $S_{\C^{[n]}}=2^{-1}S_{\B^{[n]}}$, \Cref{prop:greg_n} is exactly the main theorem stated in the introduction (\Cref{intro:main}).
\end{Rem}

The factorization of \Cref{prop:greg_n} together with the Taylor factorization of \Cref{prop:case0} satisfies the assumptions of \Cref{lem:iteration_hermite}.
Therefore we get a criterion to check the $C^d$-convergence, $d\geq 1$, of an Hermite subdivision scheme of spectral order $d$: If $2^{-1}S_{\B^{[d]}}$ is contractive, then the Hermite scheme associated with $S_{\A}$ is $C^d$-convergent.
By considering $S_{\B}=2^{-1}S_{\B^{[d]}}$, we thus obtain \Cref{intro:cor_main} from the introduction:
\begin{cor}\label{cor:greg_d}
Let $d\geq1$ and let $S_{\A}$ be an Hermite subdivision operator of spectral order $d$. Then there exists $S_{\B}$ such that
\begin{equation*}
\G^{[d]}S_{\A}=2^{-d}S_{\B}\G^{[d]},
\end{equation*}
where $\G^{[d]}$ is the $d$-th Gregory operator \eqref{def:gregory_op}.
If $S_{\B}$ is contractive, then the Hermite subdivision scheme associated with $S_{\A}$ is $C^d$-convergent.
\end{cor}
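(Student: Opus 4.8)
The plan is to assemble the corollary from results already in hand, with essentially no new computation. For the factorization identity, I would invoke \Cref{prop:greg_n} with $n=d$, obtaining a subdivision operator $S_{\B^{[d]}}$ with $\G^{[d]}S_{\A}=2^{-(d+1)}S_{\B^{[d]}}\G^{[d]}$. Defining $S_{\B}:=2^{-1}S_{\B^{[d]}}$ and extracting one factor of $2^{-1}$ from the right-hand side then gives $\G^{[d]}S_{\A}=2^{-d}S_{\B}\G^{[d]}$ directly. This first step is pure bookkeeping.

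For the convergence statement, the plan is to verify that the hypotheses of \Cref{lem:iteration_hermite} hold with $\C^{[n]}=\B^{[n]}$ for $n=0,\ldots,d$, and then to apply that lemma. The required chain of factorizations is supplied by the earlier results: \Cref{prop:case0} gives the Taylor factorization $TS_{\A}=2^{-1}S_{\B^{[0]}}T$; \Cref{thm_main} gives the iterated factorizations $\Delta_{V^{[n-1]}}\cdots\Delta_{V^{[0]}}TS_{\A}=2^{-(n+1)}S_{\B^{[n]}}\Delta_{V^{[n-1]}}\cdots\Delta_{V^{[0]}}T$ for $n=1,\ldots,d-1$; and the proof of \Cref{prop:greg_n} supplies the final ($n=d$) step of the same form. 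Using the identification $\Delta_{V^{[n-1]}}\cdots\Delta_{V^{[0]}}T=\G^{[n]}$ established in the corollary preceding \Cref{prop:greg_n}, these equalities are precisely those demanded by \Cref{lem:iteration_hermite}.

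It then remains to confirm the $\E$-generator condition for each $V^{[n]}$. From \Cref{prop:case0}, $\E_{\B^{[0]}}$ is spanned by $[0,1]^T$, which is the first column of $V^{[0]}$; since the two columns of $V^{[0]}$ are linearly independent, $V^{[0]}$ is an $\E_{\B^{[0]}}$-generator. From \Cref{thm_main}, $\E_{\B^{[n]}}$ is spanned by $[1,G_n]^T$, the first column of $V^{[n]}$, for $n=1,\ldots,d-1$, and the same linear-independence observation shows $V^{[n]}$ is an $\E_{\B^{[n]}}$-generator. With every hypothesis of \Cref{lem:iteration_hermite} verified, and noting that its contractivity assumption that $2^{-1}S_{\B^{[d]}}$ be contractive is by construction identical to the assumption that $S_{\B}$ be contractive, the lemma yields the claimed $C^d$-convergence.

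Since all the analytic substance already resides in \Cref{prop:case0}, \Cref{thm_main}, \Cref{prop:greg_n}, and \Cref{lem:iteration_hermite}, I do not anticipate a genuine obstacle; the corollary is a matter of aligning indices and unwinding the definition $S_{\B}=2^{-1}S_{\B^{[d]}}$. The one point requiring care is the boundary case $d=1$, in which \Cref{thm_main} (stated for $d\geq2$) is vacuous: here I would lean directly on \Cref{prop:case0} together with the $d=1$ branch in the proof of \Cref{prop:greg_n} to produce the single factorization step that \Cref{lem:iteration_hermite} needs, so that the index range $n=0,\ldots,d$ is seen to be covered even when $d=1$. I would flag this edge case explicitly, as it is the only place where a careless reader might drop a step.
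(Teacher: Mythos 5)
Your proposal is correct and takes essentially the same route as the paper: the paper likewise obtains the factorization by rescaling the operator from \Cref{prop:greg_n} via $S_{\B}=2^{-1}S_{\B^{[d]}}$, and deduces $C^d$-convergence by observing that the chain of factorizations from \Cref{prop:case0}, \Cref{thm_main} and \Cref{prop:greg_n} satisfies the hypotheses of \Cref{lem:iteration_hermite}. Your explicit verification of the $\E$-generator conditions and your handling of the $d=1$ edge case are spelled out in more detail than the paper's one-line justification, but the argument is the same.
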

\begin{Rem}
Note that since the spectral condition of order $d$ implies the spectral condition of order $\ell$, for every $\ell \leq d$, \Cref{cor:greg_d} can be used to prove any regularity $\ell \leq d$ of the Hermite scheme. This is useful for schemes which have lower regularity than polynomial reproduction order, see e.g.\ some of the examples in \cite{jeong17}.
\end{Rem}
\section{Examples}\label{sec:ex}
In this section we provide an algorithm for computing the $n$-th Gregory factorization using symbols and apply it to an example of \cite{jeong17}. We also show that this example is an incident of an Hermite scheme which satisfies the spectral condition but does not reproduce polynomials, proving that these concepts are not equivalent.
\begin{alg}\label{alg:general_ex}
We show how the $n$-th Gregory factorization \eqref{nth_greg} can be computed using \emph{symbols}. The \emph{symbol} of a sequence $\cb\in \ell(\ZZ)_0^{2}$ is the Laurent polynomial
\begin{equation*}
\cb^{\ast}(z)=\sum_{j\in\ZZ}\cb_j z^j, \quad z\in \CC \,\backslash \,\{0\}.
\end{equation*}
Similarly, we can define $\A^{\ast}(z)$ for $\A \in \ell(\ZZ)^{2\times 2}_{0}$. It is well-known \cite{dyn02,merrien12} that a factorization of the form \eqref{nth_greg} relates to the following equation in symbols:
\begin{equation*}
{\G^{[n]}}^{\ast}(z)\A^{\ast}(z)=2^{-n}{\B^{[n]}}^{\ast}(z){\G^{[n]}}^{\ast}(z^2).
\end{equation*}
With $\A^{\ast}(z)=\left[\ab^{\ast}_{jk}(z) \right]_{j,k=1}^2$ and $g^{\ast}_n(z)=-\sum_{\ell=0}^{n-1}G_{\ell}(z^{-1}-1)^{\ell}$ we obtain ${\B^{[n]}}^{\ast}(z)=\left[{\bb^{[n]}_{jk}}^{\ast}(z) \right]_{j,k=1}^2$:
\begin{align*}
&{\bb^{[n]}_{11}}^{\ast}(z)=2^{n}\frac{(z^{-2}-1)\ab_{22}^{\ast}(z)-g^{\ast}_n(z^2)\ab_{21}^{\ast}(z)}{(z^{-1}-1)(z^{-1}+1)^{n+1}},\\[0.2cm]
&{\bb^{[n]}_{12}}^{\ast}(z)=2^{n}\frac{(z^{-1}-1)^{n-1}\ab_{21}^{\ast}(z)}{z^{-1}+1},\\[0.2cm]
&{\bb^{[n]}_{21}}^{\ast}(z)=2^{n}\\
&\frac{(z^{-2}-1)((z^{-1}-1)\ab_{12}^{\ast}(z)+g^{\ast}_n(z)\ab^{\ast}_{22}(z))-g^{\ast}_n(z^2)((z^{-1}-1)\ab_{11}^{\ast}(z)+g^{\ast}_n(z)\ab_{21}^{\ast}(z))}{(z^{-2}-1)^{n+1}},\\[0.2cm]
&{\bb^{[n]}_{22}}^{\ast}(z)=2^{n}\frac{(z^{-1}-1)\ab_{11}^{\ast}(z)+g^{\ast}_n(z)\ab_{21}^{\ast}(z)}{(z^{-2}-1)},
\end{align*}
which can be computed, for example, with Mathematica.
\end{alg}

\begin{ex}\label{ex:H1}
We consider the primal Hermite subdivision scheme $H_1$ proposed in \cite{jeong17}. Its mask is supported in $[-2,2]\cap \ZZ$ with nonzero elements given by
\begin{align*}
\left[
\begin{array}{rr} \theta & -\frac{\theta}{2}\\ -\frac{3\omega}{2} & \frac{\omega}{2}\end{array}\right], \,
\left[
\begin{array}{rr} \frac12 & -\frac18\\[0.1cm] \frac34& -\frac{1}{8}\end{array}\right], \,
\left[
\begin{array}{cc} 1-2\theta & 0\\ 0 & \frac{1+4\omega}{2}\end{array}\right], \,
\left[
\begin{array}{rr} \frac12 & \frac18\\[0.1cm] -\frac34& -\frac{1}{8}\end{array}\right], \,
\left[
\begin{array}{rr} \theta & \frac{\theta}{2}\\[0.05cm] \frac{3\omega}{2}& \frac{\omega}{2}\end{array}\right],
\end{align*}
with parameters $\theta, \omega \in \RR$.

In \cite{jeong17} it is proved that $H_1$ reproduces polynomials up to degree $3$ and thus it satisfies the spectral condition up to order $3$ with spectral polynomials $1,x,\tfrac{1}{2!}x^2,\tfrac{1}{3!}x^3$, see \cite{conti14}. By \Cref{prop:greg_n}, we have Gregory factorizations for $n=1,2,3$.

It is easy to see that the scheme $H_1$ does not satisfy the spectral condition of order $4$ with spectral polynomial $\tfrac{1}{4!}\,x^4$ for all parameters $\theta,\omega$. This implies that it does not reproduce polynomials of degree $4$ for all parameters $\theta, \omega$, see \cite{conti14}. This can also be proved using the methods of \cite{conti18}.
However, with $\theta=1/32$ it satisfies the spectral condition of order $4$ with $4$-th spectral polynomial given by $\fp_{4}(x)=\tfrac{1}{4!}\,x^4+\tfrac{1}{360}$. Therefore, $H_1$ with $\theta=1/32$ provides an example of an Hermite scheme which does not reproduce polynomials of degree $4$, but satisfies the spectral condition of order $4$. 
To the best of our knowledge, this is the first time it is observed that the spectral condition is \emph{not} equivalent to the reproduction of polynomials.

This explains why in \cite{jeong17} a factorization of $H_1$ 
up to order $n=4$ is possible, even though the mask only reproduces polynomials up to degree $3$. Of course we also have a $4$-th Gregory factorization for $\theta=1/32$, which we now provide using \Cref{alg:general_ex}.

For $\theta=1/32$, the mask $\B^{[4]}$ of the $4$-th Gregory factorization \eqref{nth_greg} is supported in $[-4,2]\cap \ZZ$:
\begin{align*}
\left[
\begin{array}{cc}
0 & -24\,\omega\\
0 & 0
\end{array}
\right], \,
\left[
\begin{array}{cc}
0 & 96\,\omega+12\\[0.1cm]
0 & \omega
\end{array}
\right], \,
\left[
\begin{array}{cc}
-\omega & -168\,\omega-48\\[0.1cm]
0 & -5\,\omega-\frac{1}{2}
\end{array}
\right],
\end{align*}
\vspace{-0.4cm}
\begin{align*}
\left[
\begin{array}{cc}
4\,\omega +\frac{1}{2} & 192\,\omega+72\\[0.1cm]
\frac{\omega}{24} & 20\,\omega+3
\end{array}
\right],\,
\left[
\begin{array}{cc}
-6\,\omega-2 & -168\,\omega-48 \\[0.1cm]
-\frac{5\,\omega}{24}-\frac{1}{48} & 4\,\omega-2
\end{array}
\right],
\end{align*}
\vspace{-0.4cm}
\begin{align*}
\left[
\begin{array}{cc}
4\,\omega +\frac{5}{2} & 96\,\omega+12\\[0.1cm]
\frac{19\,\omega}{24}+\frac{1}{8} & 19\,\omega+3
\end{array}
\right], \quad
\left[
\begin{array}{cc}
-\omega & -24\,\omega\\[0.1cm]
\frac{3\,\omega}{8}-\frac{1}{16} & 9\,\omega +\frac{1}{2}
\end{array}
\right].
\end{align*}
We would like to stress that $\theta=1/32$ is the only value for which we obtain a $4$-th Gregory factorization of the scheme $H_1$ (and thus the only value for which the spectral condition of order $4$ is satisfied).
With the results of \Cref{lem:iteration_hermite} and our Gregory factorzation we can now analyze the smoothness of $H_1$. Numerical computations show that $\Vert(\frac{1}{2}S_{\B^{[4]}})^6\Vert_\infty<1$ for $\omega \in [-0.10210,-0.09582]$. Thus $H_1$ is $C^4$ for this range of $\omega$ which confirms the result of \cite{jeong17}. The advantage of our factorization however is that we only need 6 iterations to prove the contractivity of $S_{\B^{[4]}}$ whereas 24 iterations are needed in \cite{jeong17}. Therefore, we can enlarge the domain for $\omega$ and still obtain a smoothness result. Computations show that the Hermite scheme $H_1$ is $C^4$ for $\omega \in [-0.12,-0.088]$ since $\Vert(\frac{1}{2}S_{\B^{[4]}})^{10}\Vert_\infty<1$ for these values of $\omega$.
\end{ex}

\section{Conclusion}\label{sec:conclusion}
In this paper we provide a novel factorization framework for Hermite subdivision operators based on Stirling numbers and Gregory coefficients. We further derive \Cref{alg:general_ex}, which allows to easily compute the $n$-th Gregory factorization using symbols. The usefulness of the Gregory factorization is evident from the reduction of computational cost for proving $C^d$-convergence of an Hermite subdivision scheme: Only one factorization needs to be computed, independently of $d$ (\Cref{cor:greg_d}).
Certainly, the $d$-th Gregory factorization is not the only possible factorization for Hermite schemes of spectral order $d$, but the only one which is explicitly computed for general $d$.

Furthermore, in \Cref{ex:H1}, we provide an instance of an Hermite scheme which satisfies the spectral condition of order $d=4$, but does not reproduce polynomials of degree $4$, showing that the spectral condition is not equivalent to the reproduction of polynomials.

\section*{Acknowledgments}
We thank B.\ Jeong and J.\ Yoon for sharing some of the masks from \cite{jeong17}, which we used for validating our method.

\noindent
S.H.\ acknowledges the support of the Austrian Science Fund (FWF): W1230.

\noindent
C.C.\ acknowledges the support of GNCS-INdAM, Italy.

\bibliographystyle{abbrvnat}

\end{document}